\theoremstyle{plain}
\newtheorem{thm}{Theorem}[section]
\newtheorem{prop}[thm]{Proposition}
\newtheorem{lem}[thm]{Lemma}
\newtheorem{cor}[thm]{Corollary}
\theoremstyle{definition}
\newcommand{\Hom}{\mathrm{Hom}}
\newcommand{\Lie}{\mathrm{Lie}}
\newcommand{\Ker}{\mathrm{Ker}}
\newcommand{\Coker}{\mathrm{Coker}}
\newcommand{\Img}{\mathrm{Im}}
\newcommand{\prjt}{\mathrm{pr}}
\newcommand{\Fil}{\mathrm{Fil}}
\newcommand{\Spf}{\mathrm{Spf}}
\newcommand{\Spec}{\mathrm{Spec}}
\newcommand{\Gal}{\mathrm{Gal}}
\newcommand{\Frac}{\mathrm{Frac}}
\newcommand{\Span}{\mathrm{Span}}
\newcommand{\Ha}{\mathrm{Ha}}
\newcommand{\Hdg}{\mathrm{Hdg}}
\newcommand{\HT}{\mathrm{HT}}
\newcommand{\sExt}{\mathscr{E}\!{\it xt}}
\newcommand{\Kbar}{\bar{K}}
\newcommand{\okbar}{\mathcal{O}_{\bar{K}}}
\newcommand{\okey}{\mathcal{O}_K}
\newcommand{\oel}{\mathcal{O}_L}
\newcommand{\cA}{\mathcal{A}}
\newcommand{\cC}{\mathcal{C}}
\newcommand{\cD}{\mathcal{D}}
\newcommand{\cE}{\mathcal{E}}
\newcommand{\cF}{\mathcal{F}}
\newcommand{\cG}{\mathcal{G}}
\newcommand{\cH}{\mathcal{H}}
\newcommand{\cK}{\mathcal{K}}
\newcommand{\cM}{\mathcal{M}}
\newcommand{\cN}{\mathcal{N}}
\newcommand{\cO}{\mathcal{O}}
\newcommand{\cU}{\mathcal{U}}
\newcommand{\Ga}{\mathbb{G}_\mathrm{a}}
\newcommand{\TSG}{T^{*}_{\SG}}
\newcommand{\okp}{\mathcal{O}_{K_1}}
\newcommand{\tokbar}{\tilde{\mathcal{O}}_{\Kbar}}
\newcommand{\tokey}{\tilde{\mathcal{O}}_{K}}
\newcommand{\PD}{\mathrm{DP}}
\newcommand{\CRYS}{\mathrm{CRYS}}
\newcommand{\Gr}{\mathrm{Gr}}
\newcommand{\Mod}{\mathrm{Mod}}
\newcommand{\SG}{\mathfrak{S}}
\newcommand{\SGe}{\mathfrak{E}}
\newcommand{\SGf}{\mathfrak{F}}
\newcommand{\SGm}{\mathfrak{M}}
\newcommand{\SGn}{\mathfrak{N}}
\newcommand{\SGl}{\mathfrak{L}}
\newcommand{\ModSGf}{\mathrm{Mod}_{/\mathfrak{S}_1}^{1,\varphi}}
\newcommand{\ModSGffr}{\mathrm{Mod}_{/\mathfrak{S}}^{1,\varphi}}
\newcommand{\ModSf}{\mathrm{Mod}_{/S_1}^{1,\varphi}}
\newcommand{\ModSffr}{\mathrm{Mod}_{/S}^{1,\varphi}}
\newcommand{\upi}{\underline{\pi}}
\newcommand{\bC}{\mathbb{C}}
\newcommand{\bD}{\mathbb{D}}
\newcommand{\bZ}{\mathbb{Z}}
\newcommand{\bQ}{\mathbb{Q}}
\newcommand{\bF}{\mathbb{F}}
\newcommand{\sH}{\mathscr{H}}
\newcommand{\sI}{\mathscr{I}}
\newcommand{\sJ}{\mathscr{J}}
\newcommand{\sS}{\mathscr{S}}
\newcommand{\frB}{\mathfrak{B}}
\newcommand{\frG}{\mathfrak{G}}
\newcommand{\frI}{\mathfrak{I}}
\newcommand{\frP}{\mathfrak{P}}
\newcommand{\frX}{\mathfrak{X}}
\newcommand{\frZ}{\mathfrak{Z}}
\renewcommand{\p@enumii}{}
\begin{document}

\title[Canonical subgroups via Breuil-Kisin modules]{Canonical subgroups via Breuil-Kisin modules}
\author{Shin Hattori}
\date{\today}
\email{shin-h@math.kyushu-u.ac.jp}
\address{Faculty of Mathematics, Kyushu University}
\thanks{Supported by Grant-in-Aid for Young Scientists B-21740023.}

\begin{abstract}
Let $p>2$ be a rational prime and $K/\bQ_p$ be an extension of complete discrete valuation fields. Let $\cG$ be a truncated Barsotti-Tate group of level $n$, height $h$ and dimension $d$ over $\okey$ with $0<d<h$. In this paper, we show that if the Hodge height of $\cG$ is less than $1/(p^{n-2}(p+1))$, then there exists a finite flat closed subgroup scheme of $\cG$ of order $p^{nd}$ over $\okey$ with standard properties as the canonical subgroup.
\end{abstract}

\maketitle

\section{Introduction}\label{intro}

Let $p$ be a rational prime and $K/\bQ_p$ be an extension of complete discrete valuation fields. Let $k$ be the residue field of $K$ and $e=e(K)$ be the absolute ramification index of $K$. Let $\okey$ denote the ring of integers of $K$ and put $\tokey=\okey/p\okey$. We fix an algebraic closure $\Kbar$ of $K$ and let $v_p$ be the valuation of $K$ which is normalized as $v_p(p)=1$ and extended to $\Kbar$. For a non-negative rational number $i$, we put $m_{\Kbar}^{\geq i}=\{a\in \okbar\mid v_p(a)\geq i\}$ and similarly for $m_K^{\geq i}$.

Let $E$ be an elliptic curve over $\okey$. By fixing a formal parameter of the formal completion of $E$ along the zero section, we identify the group of $p$-torsion points $E[p](\okbar)$ as a subset of $m_{\Kbar}$. If the group $E[p](\okbar)$ has a subgroup $C$ of order $p$ whose elements have valuations greater than those of the elements of $E[p](\okbar)\setminus C$, then the subgroup $C$ is called the (level one) canonical subgroup of $E$. It was shown that the canonical subgroup of $E$ exists if the Hodge height of $E$, namely the $p$-adic valuation of the Hasse invariant of $E$, is less than $p/(p+1)$, and based on this result, Katz studied a spectral theory of the $U_p$ operator for overconvergent elliptic modular forms (\cite{Ka}).

For a similar investigation of $p$-adic modular forms for general reductive algebraic groups, we need a generalization of the existence theorem of the canonical subgroup to higher dimensional abelian schemes over $\okey$. Such a generalization was first obtained by Abbes and Mokrane via a calculation of $p$-adic vanishing cycles of abelian schemes (\cite{AM}). Namely, for an abelian scheme $A$ over $\okey$ with relative dimension $g$, they found a subgroup of $A[p](\okbar)$ of order $p^g$ in the upper ramification filtration $\{A[p]^{j+}(\okbar)\}$ if the Hodge height of $A$ (\cite[Subsection 1.2]{AM}) is less than an explicit bound. Their work was followed by many improvements and generalizations with various methods, such as \cite{AG}, \cite{Co}, \cite{GK1}, \cite{GK2}, \cite{KL}, \cite{Ra}, \cite{Ti} and especially \cite{Fa} and \cite{Ti_HN}.

In this paper, we prove an existence theorem of level $n$ canonical subgroups for $p>2$, using a classification theory of finite flat (commutative) group schemes due to Breuil and Kisin (\cite{Br}, \cite{Br_AZ}, \cite{Ki_Fcrys}, \cite{Ki_BT}), and a ramification-theoretic technique developed by the author (\cite{Ha4}). 

Before stating the main theorem, we fix some notation. For a finite flat group scheme $\cG$ over $\okey$ and its module of invariant differentials $\omega_{\cG}$ over $\okey$, write $\omega_{\cG}\simeq \oplus_i \okey/(a_i)$ with some $a_i\in \okey$ and put $\deg(\cG)=\sum_iv_p(a_i)$. We define the Hodge-Tate map
\[
\HT_i: \cG(\okbar)\to \omega_{\cG^\vee}\otimes \okbar/m_{\Kbar}^{\geq i}
\]
by $x\mapsto (x^\vee)^{*}(dt/t)$, where $x^\vee: \cG^\vee\times \Spec(\okbar)\to \mu_p$ is the dual map of $x\in\cG(\okbar)$. Let $\cG^j$ and $\cG^{j+}$ ({\it resp.} $\cG_i$ and $\cG_{i+}$) denote the upper ({\it resp.} lower) ramification subgroup schemes of $\cG$ (see \cite{Ha4}). Here we normalize the indices of the filtrations by multiplying the usual ones by $1/e(K)$, so that these ramification subgroup schemes are compatible with any base change of complete discrete valuation rings. In particular, $\cG_i$ is defined by
\[
\cG_i(\okbar)=\Ker(\cG(\okbar)\to \cG(\okbar/m_{\Kbar}^{\geq i})).
\]
For a truncated Barsotti-Tate group $\cG$ of level $n$, height $h$ and dimension $d<h$ over $\okey$, let us consider its Cartier dual $\cG^\vee$ and the $p$-torsion subgroup scheme $\cG^\vee[p]$. Then the Lie algebra $\Lie(\cG^\vee[p]\times\Spec(\tokey))$ is a free $\tokey$-module of rank $h-d$. We define the Hodge height $\Hdg(\cG)$ of $\cG$ to be the truncated valuation $v_p(\det(V_{\cG^\vee[p]}))\in [0,1]$ of the determinant of the action of the Verschiebung $V_{\cG^\vee[p]}$ on this $\tokey$-module. Note the equality $\Hdg(\cG)=\Hdg(\cG^\vee)$ (see for example \cite[Proposition 2]{Fa}, where the Hodge height of $\cG$ is denoted by $\Ha(\cG)$ and referred as the Hasse invariant of $\cG$). Then the main theorem of this paper is the following.

\begin{thm}\label{main}
Let $p>2$ be a rational prime and $K/\bQ_p$ be an extension of complete discrete valuation fields. Let $\cG$ be a truncated Barsotti-Tate group of level $n$, height $h$ and dimension $d$ over $\okey$ with $0<d<h$ and Hodge height $w=\Hdg(\cG)$. 
\begin{enumerate}
\item If $w<1/(p^{n-2}(p+1))$, then there exists a finite flat closed subgroup scheme $\cC_n$ of $\cG$ of order $p^{nd}$ over $\okey$, which we call  the level $n$ canonical subgroup of $\cG$, such that $\cC_n\times \Spec(\okey/m_K^{\geq 1-p^{n-1}w})$ coincides with the kernel of the $n$-th iterated Frobenius homomorphism $F^n$ of $\cG\times \Spec(\okey/m_K^{\geq 1-p^{n-1}w})$. Moreover, the group scheme $\cC_n$ has the following properties:
\begin{enumerate}[(a)]
\item\label{main-deg} $\deg(\cG/\cC_n)=w(p^n-1)/(p-1)$.
\item\label{main-iso} Put $\cC_n'$ to be the level $n$ canonical subgroup of $\cG^\vee$. Then we have the equality of subgroup schemes $\cC_n'=(\cG/\cC_n)^\vee$, or equivalently $\cC_n(\okbar)=\cC_n'(\okbar)^\bot$, where $\bot$ means the orthogonal subgroup with respect to the Cartier pairing.
\item\label{main-ram1} If $n=1$, then $\cC_1=\cG_{(1-w)/(p-1)}=\cG^{pw/(p-1)+}$.
\end{enumerate}

\item If $w<(p-1)/(p^n-1)$, then the subgroup scheme $\cC_n$ also satisfies the following: 
\begin{enumerate}[(a)]
\setcounter{enumii}{3}
\item\label{main-free}   the group $\cC_n(\okbar)$ is isomorphic to $(\bZ/p^n\bZ)^d$.
\item\label{main-sub} The scheme-theoretic closure of $\cC_n(\okbar)[p^i]$ in $\cC_n$ coincides with the subgroup scheme $\cC_i$ of $\cG[p^i]$ for $1\leq i\leq n-1$.
\end{enumerate}

\item\label{main-HT} If $w<(p-1)/p^n$, then the subgroup  $\cC_n(\okbar)$ coincides with the kernel of the Hodge-Tate map $\HT_{n-w(p^n-1)/(p-1)}$.

\item\label{main-ram} If $w<1/(2p^{n-1})$, then the subgroup scheme $\cC_n$ coincides with the upper ramification subgroup scheme $\cG^{j+}$ for 
\[
p w(p^n-1)/(p-1)^2 \leq j <p(1-w)/(p-1).
\]
\end{enumerate}
\end{thm}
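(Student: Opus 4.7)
The plan is to translate the problem into the framework of Breuil-Kisin modules, carry out the construction there, and then dictionary-translate back to finite flat group schemes. Fix a uniformizer $\pi$ of $K$ with Eisenstein polynomial $E(u)$; to $\cG$ there corresponds a Breuil-Kisin module $\SGm$ over an appropriate quotient of $\SG = W(k)[[u]]$, equipped with a Frobenius-semilinear map $\varphi\colon \SGm \to \SGm$. The truncated Barsotti-Tate hypothesis of dimension $d$ forces $\det(\varphi)$ to be $E(u)^d$ up to a unit, and the Hodge height $w$ is visible in the $u$-adic valuation of the matrix entries of $\varphi$ after reduction modulo $p$.

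Under the hypothesis $w < 1/(p^{n-2}(p+1))$, I construct a $\varphi$-stable submodule $\SGm_1 \subset \SGm$ of rank $h-d$ which, under the Breuil-Kisin anti-equivalence, corresponds to the quotient map $\cG \twoheadrightarrow \cG/\cC_n$ and thereby defines $\cC_n$ as its kernel. The construction is iterative: start from an obvious approximation suggested by the reduction of $\varphi$ modulo a suitable power of $E(u)$, then successively correct it so as to obtain genuine $\varphi$-stability. The bound on $w$ is exactly what makes this correction procedure contract in the $(u,p)$-adic topology, and the resulting $\SGm_1$ is unique. From its explicit form one reads off both that $\cC_n$ coincides with $\Ker(F^n)$ modulo $p^{1-p^{n-1}w}$ and that $\deg(\cG/\cC_n) = w(p^n-1)/(p-1)$, yielding (1) and (a). Property (1)(b) then follows from the compatibility of the Breuil-Kisin classification with Cartier duality together with the uniqueness of $\SGm_1$; (1)(c) is deduced from the main results of \cite{Ha4} linking the construction to lower and upper ramification filtrations.

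Part (2) requires sharper analysis: under $w < (p-1)/(p^n-1)$ one proves by induction on $n$ that the submodules $\SGm_1$ constructed at each level are compatible, in the sense that the module for $\cG[p^i]$ is recovered from the one for $\cG[p^n]$; this simultaneously yields the freeness $\cC_n(\okbar) \cong (\bZ/p^n\bZ)^d$ and the compatibility (2)(b). For part (3), I compute the image of $\cC_n(\okbar)$ under $\HT_{n-w(p^n-1)/(p-1)}$ directly from the matrix description of $\SGm_1$ and show it vanishes; an order count then forces equality with the full kernel. Finally, part (4) is proved by using the Breuil-Kisin description to estimate the upper ramification jumps of $\cC_n$ and of $\cG/\cC_n$, which together with the techniques of \cite{Ha4} pin down the range of $j$ for which $\cC_n = \cG^{j+}$.

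The main technical obstacle is the iterative construction of $\SGm_1$ when $n > 1$: one must simultaneously control corrections in powers of $u$ and of $p$, and the peculiar bound $1/(p^{n-2}(p+1))$ reflects exactly this combined $(u,p)$-adic contraction. Once the fixed-point argument succeeds and $\SGm_1$ is described explicitly, the remaining properties follow from careful but essentially computational work with the matrix of $\varphi$, with the ramification-theoretic results of \cite{Ha4} supplying the bridge back to the statements about upper and lower ramification subgroups.
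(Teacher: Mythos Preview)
Your proposal diverges from the paper's argument in a fundamental structural way. The paper does \emph{not} construct a $\varphi$-stable submodule of the Breuil--Kisin module of $\cG$ directly at level $n$. Instead, the Breuil--Kisin construction is carried out \emph{only} at level one (Theorem~\ref{cansubmain}), where the module lives over $\SG_1=k[[u]]$ and the fixed-point argument (Lemma~\ref{uniquelift}) is purely $u$-adic, with no $p$-adic component. For $n\geq 2$ the paper proceeds by induction: using Fargues' lemma (Lemma~\ref{Hasse}) that $p^{-(n-1)}\cC/\cC$ is a truncated Barsotti--Tate group of level $n-1$ with Hodge height $pw$, one defines $\cC_n$ as the preimage of the level $n-1$ canonical subgroup of this quotient. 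The bound $w<1/(p^{n-2}(p+1))$ then falls out transparently from the inductive step $pw<1/(p^{n-3}(p+1))$, and properties (a)--(e), (3), (4) are deduced from the induction hypothesis combined with the level-one statements, Lemma~\ref{freeness}, and the argument of \cite{Ti_HN}.

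Your direct approach---building a $\varphi$-stable rank $h-d$ submodule over $W_n(k)[[u]]$ by a combined $(u,p)$-adic contraction---is conceivable but faces obstacles your sketch does not address. First, it is unclear what the ``obvious approximation'' is at level $n$: the paper's starting point $\Fil^1\SGm_1=\Img(1\otimes\varphi)\subset\SGm/u^e\SGm$ is specific to level one, where this is a direct summand precisely because $\cG[p]$ is a truncated Barsotti--Tate group of level one. Second, you assert that the bound $1/(p^{n-2}(p+1))$ ``reflects exactly'' the combined contraction, but you give no computation supporting this; in the paper this bound is forced by the induction, not by any single fixed-point estimate. Third, several of the later parts (freeness via the degree estimate in Lemma~\ref{freeness}, the Hodge--Tate kernel via the diagram chase with $\omega_{\cC^\vee}$, the ramification bound via \cite{Ti_HN}) are handled in the paper by arguments that do not use any level-$n$ Breuil--Kisin description, and it is not evident that your matrix description would yield them more directly. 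In short, the paper's reduction to level one is the key simplification that makes everything computable; your proposal trades this for a harder direct construction whose details remain to be supplied.
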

Moreover, we show that such $\cC_n$ is unique if $w<p(p-1)/(p^{n+1}-1)$ (Proposition \ref{uniqueFrn}). Since the upper ramification subgroups can be patched into a family (see Lemma \ref{familyup}), we also have the following corollary.

\begin{cor}\label{cansubfamily}
Let $K/\bQ_p$ be an extension of complete discrete valuation fields and $j$ be a positive rational number. Let $\frX$ be an admissible formal scheme over $\Spf(\okey)$ which is quasi-compact and $\frG$ be a truncated Barsotti-Tate group of level $n$ over $\frX$ of constant height $h$ and dimension $d$ with $0<d<h$. We let $X$ and $G$ denote the Raynaud generic fibers of the formal schemes $\frX$ and $\frG$, respectively. For a finite extension $L/K$ and $x\in X(L)$, we put $\frG_x=\frG\times_{\frX,x}\Spf(\oel)$, where we let $x$ also denote the map $\Spf(\oel)\to \frX$ obtained from $x$ by taking the scheme-theoretic closure and the normalization. For a non-negative rational number $r$, let $X(r)$ be the admissible open subset of $X$ defined by 
\[
X(r)(\Kbar)=\{x\in X(\Kbar)\mid \Hdg(\frG_x)<r\}.
\]
Put $r_1=p/(p+1)$ and $r_n=1/(2p^{n-1})$ for $n\geq 2$. 

Suppose $p>2$. Then there exists an admissible open subgroup $C_n$ of $G|_{X(r_n)}$ such that, etale locally on $X(r_n)$, the rigid-analytic group $C_n$ is isomorphic to the constant group $(\bZ/p^n\bZ)^d$ and, for any finite extension $L/K$ and $x\in X(L)$, the fiber $(C_n)_x$ coincides with the generic fiber of the level $n$ canonical subgroup of $\frG_x$.
\end{cor}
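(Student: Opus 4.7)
The plan is to glue the pointwise level $n$ canonical subgroups from Theorem \ref{main} into a rigid-analytic family by combining three ingredients: the characterization of $\cC_n$ as an upper ramification subgroup provided by Theorem \ref{main}(4) for $n\geq 2$ (or by Theorem \ref{main}(1)(c) for $n=1$), the family version of upper ramification subgroups from Lemma \ref{familyup}, and the pointwise uniqueness from Proposition \ref{uniqueFrn}.

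For $n\geq 2$, I would cover $X(r_n)$ by admissible opens $U_\alpha$ on which the Hodge height takes values in a sub-interval $[s_\alpha, s'_\alpha)\subset [0, r_n)$ small enough that there exists a single rational $j_\alpha$ satisfying
\[
pw(p^n-1)/(p-1)^2\leq j_\alpha <p(1-w)/(p-1) \quad \text{for all } w\in [s_\alpha, s'_\alpha).
\]
Such $j_\alpha$ exist because at every $w<r_n$ the corresponding validity interval is nonempty; this reduces to the elementary inequality $r_n\leq (p-1)/(p^n+p-2)$, which holds for $p>2$ and $n\geq 2$. On each $U_\alpha$, Lemma \ref{familyup} applied to (a formal model of) $\frG|_{U_\alpha}$ supplies an admissible open subgroup $C_n^\alpha$ of $G|_{U_\alpha}$ whose geometric fiber at each $x$ is the generic fiber of $\frG_x^{j_\alpha+}$, and Theorem \ref{main}(4) identifies this with the generic fiber of $\cC_n(\frG_x)$. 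The case $n=1$ is handled in the same way using Theorem \ref{main}(1)(c), exploiting that the upper ramification filtration $\{\cG^{j+}\}_j$ is locally constant in $j$ between consecutive jumps, so a single rational $j_\alpha$ suffices on a small enough $U_\alpha$.

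On an overlap $U_\alpha\cap U_\beta$, both $C_n^\alpha$ and $C_n^\beta$ have geometric fibers equal to the unique pointwise canonical subgroup (Proposition \ref{uniqueFrn}, whose hypothesis $w<p(p-1)/(p^{n+1}-1)$ is implied by $w<r_n$ for $p>2$). Since $G$ is a finite \'etale rigid-analytic group over $X(r_n)$, admissible open subgroups are determined by their systems of geometric fibers, so $C_n^\alpha$ and $C_n^\beta$ agree on the overlap and glue to a global $C_n$. For \'etale-local constancy, Theorem \ref{main}(2)(a)---applicable because $r_n\leq (p-1)/(p^n-1)$ for $p>2$---forces every geometric fiber of $C_n$ to be isomorphic to $(\bZ/p^n\bZ)^d$; as $C_n$ is finite \'etale of constant rank $p^{nd}$ with all geometric fibers abstractly isomorphic to $(\bZ/p^n\bZ)^d$, it becomes constant after a suitable \'etale cover of $X(r_n)$.

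The step I expect to be most delicate is producing the admissible covering $\{U_\alpha\}$ with the prescribed control on the Hodge height, which amounts to showing that sublevel sets of $\Hdg$ define admissible opens of $X$. This should follow from the description of $\Hdg$ as the $p$-adic valuation of a section (the determinant of Verschiebung on the Lie algebra of $\frG^\vee[p]\times\Spec(\tokey)$) of a line bundle on the formal model, so that its sublevel sets correspond to rational subdomains on the rigid generic fiber. A secondary point is the $n=1$ case, where one must argue that the step structure of the upper ramification filtration allows a single rational $j_\alpha$ to work on $U_\alpha$ despite Theorem \ref{main}(1)(c) naming only the threshold index $pw/(p-1)$ at each point.
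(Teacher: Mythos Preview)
Your strategy differs from the paper's in a way that creates a real problem for $n=1$. The paper does not cover $X(r_n)$ and glue; it invokes Lemma \ref{familyup} once globally with $j=p(p^n-1)/(p-1)^2$ and $\sH=\Ha_{\frI}(\frG[p])$, so that the fiber at $x$ is $\frG_x^{jv_p(\sH(x))+}=\frG_x^{\,p(p^n-1)w/(p-1)^2\,+}$ with $w=\Hdg(\frG_x)$. The whole point of the ideal parameter $\sH$ in Lemma \ref{familyup} is to let the ramification index vary linearly with the Hodge height, so that Theorem \ref{main}(\ref{main-ram1}) (for $n=1$) and Theorem \ref{main}(\ref{main-ram}) (for $n\geq 2$) apply at the exact left endpoint of the admissible interval at every fiber simultaneously. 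No covering, no gluing, no appeal to Proposition \ref{uniqueFrn} is needed; finiteness and \'etale-local constancy then follow from Theorem \ref{main}(\ref{main-free}) and \cite[Lemme A.1.1]{AM}.

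Your constant-index approach can be pushed through for $n\geq 2$ (indeed your inequality $r_n\leq (p-1)/(p^n+p-2)$ shows a single $j$ works over all of $X(r_n)$, so no cover is even needed there). The gap is at $n=1$ on the range $w\in[1/2,\,p/(p+1))$. There Theorem \ref{main}(\ref{main-ram}) is unavailable, and Theorem \ref{main}(\ref{main-ram1}) gives only $\cC=\cG^{pw/(p-1)+}$, a single threshold depending on $w$. Your proposed fix, that the step structure of $\{\cG^{j+}\}$ lets one choose a rational $j_\alpha$ slightly above $pw/(p-1)$ working uniformly on a small $U_\alpha$, requires a \emph{uniform} lower bound on the distance to the next break of the filtration, and nothing in the paper provides that; the next break moves with $x$. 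This is precisely the difficulty that the $\sH$-twisted form of Lemma \ref{familyup} is designed to absorb. If you want to salvage your route, the cleanest repair is to use Lemma \ref{familyup} with $\sH$ equal to the Hasse ideal rather than with a constant index, at which point the covering and gluing become unnecessary and you recover the paper's argument.
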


Note that for a smaller range of $w$, Theorem \ref{main} is also proved in \cite{Fa} and \cite{Ti_HN}. The key idea of our approach is, firstly, to lift the conjugate Hodge filtration of $\cG\times \Spec(\tokey)$ to the Breuil-Kisin module associated to $\cG$. By an induction, it suffices to consider the case of $n=1$. We may assume that the residue field $k$ is perfect and that the group scheme $\cG$ is associated to a $\varphi$-module $\SGm$ over the formal power series ring $k[[u]]$ via the Breuil-Kisin classification (see Section \ref{ReviewBK}). The Lie algebra $\Lie(\cG^\vee)$ is naturally considered as a $\varphi$-stable direct summand of the $\varphi$-module $\SGm/u^e\SGm$. Then we show that the reduction modulo $u^{e(1-w)}$ of this direct summand lifts uniquely to a $\varphi$-stable direct summand of $\SGm$. Our canonical subgroup is defined to be the finite flat closed subgroup scheme of $\cG$ associated to the quotient of $\SGm$ by this direct summand. Its properties stated in Theorem \ref{main} follow easily from the construction, basically except the uniqueness and the coincidence with ramification subgroup schemes. The second key idea is to switch to a complete discrete valuation field of equal characteristic: From the $\varphi$-module $\SGm$, we can also construct a finite flat group scheme $\cH(\SGm)$ over $k[[u]]$ (\cite{SGA3-7A}). Then, by the main theorem of \cite{Ha4}, the ramification subgroups of $\cG$ and $\cH(\SGm)$ are naturally isomorphic to each other. Moreover, it is also proved that reductions of $\cG$ and $\cH(\SGm)$ are isomorphic as pointed schemes (\cite[Corollary 4.6]{Ha4}). These reduce proofs of the remaining properties of our canonical subgroup to an elementary calculation on the side of equal characteristic, which we can easily accomplish.
\\

\noindent
{\bf Acknowledgments.} The author would like to thank the anonymous referee for many helpful comments to improve the paper.



\section{Review of the Breuil-Kisin classification of finite flat group schemes and their ramification theory}\label{ReviewBK}

Let the notation be as in the previous section and suppose that the residue field $k$ of $K$ is perfect of characteristic $p>2$. Let $W=W(k)$ be the Witt ring of $k$ and $\varphi$ be the Frobenius endomorphism of $W$. Let us fix once and for all a uniformizer $\pi$ of $K$ and a system $\{\pi_n\}_{n\in \bZ_{\geq 0}}$ of its $p$-power roots in $\Kbar$ with $\pi_0=\pi$ and $\pi_{n+1}^p=\pi_n$. Put $K_n=K(\pi_n)$ and $K_\infty=\cup K_n$. Let $E(u)\in W[u]$ be the Eisenstein polynomial of $\pi$ over $W$ and put $c_0=p^{-1}E(0)$. In this section, we briefly recall a classification theory of Breuil (\cite{Br}, \cite{Br_AZ}) and Kisin (\cite{Ki_Fcrys}, \cite{Ki_BT}) of finite flat group schemes over $\okey$ and their ramification theory, while we concentrate mainly on the $p$-torsion case.

\subsection{Breuil and Kisin modules}

Put $\SG=W[[u]]$ and $\SG_1=k[[u]]$. The $\varphi$-semilinear continuous ring endomorphisms of these rings defined by $u\mapsto u^p$ are also denoted by $\varphi$. Then a Kisin module over $\SG$ (of $E$-height $\leq 1$) is an $\SG$-module $\SGm$ endowed with a $\varphi$-semilinear map $\varphi_\SGm:\SGm \to \SGm$ such that the cokernel of the map $1\otimes \varphi_\SGm: \SG\otimes_{\varphi,\SG}\SGm\to \SGm$ is killed by $E(u)$. We write $\varphi_\SGm$ also as $\varphi$ if there is no risk of confusion. A morphism of Kisin modules is an $\SG$-linear map which is compatible with $\varphi$'s of the source and the target. Then the Kisin modules form a category and this category has an obvious notion of exact sequences. We let $\ModSGf$  ({\it resp.} $\ModSGffr$) denote its full subcategory consisting of the objects whose underlying $\SG$-modules are free of finite rank over $\SG_1$ ({\it resp.} $\SG$). For a $k[[u]]$-algebra $B$, we write $\varphi$ also for the $p$-th power Frobenius endomorphism of $B$. Then we let $\mathrm{Mod}_{/B}^{1,\varphi}$ denote the category of locally free $B$-modules $\SGm$ of finite rank endowed with a $\varphi$-semilinear map $\varphi_\SGm:\SGm\to \SGm$ such that the cokernel of the map $1\otimes \varphi_\SGm:B\otimes_{\varphi,B}\SGm\to \SGm$ is killed by $E(u)$.

We also have categories $\ModSf$ and $\ModSffr$ of Breuil modules defined as follows. Let $S$ be the $p$-adic completion of the divided power envelope $W[u]^\PD$ of $W[u]$ with respect to the ideal $(E(u))$ and the compatibility condition with the canonical divided power structure on $pW$. The ring $S$ has a natural filtration $\Fil^iS$ defined as the closure in $S$ of the ideal generated by $E(u)^j/j!$ for integers $j\geq i$. The $\varphi$-semilinear continuous ring homomorphism $S\to S$ defined by $u\mapsto u^p$ is also denoted by $\varphi$. We have $\varphi(\Fil^1 S)\subseteq p S$ and put $\varphi_1=p^{-1}\varphi|_{\Fil^1 S}$. This filtration and the map $\varphi_1$ induce a similar structure on the ring $S_n=S/p^nS$. Note that we have $\varphi(\Fil^1 S_1)=0$. Put $c=\varphi_1(E(u))\in S^\times$. Then we let ${}'\Mod_{/S}^{1,\varphi}$ denote the category of $S$-modules $\cM$ endowed with an $S$-submodule $\Fil^1\cM$ containing $(\Fil^1S)\cM$ and a $\varphi$-semilinear map $\varphi_{1,\cM}:\Fil^1\cM\to \cM$ satisfying $\varphi_{1,\cM}(s_1 m)=c^{-1}\varphi_1(s_1)\varphi_{1,\cM}(E(u)m)$ for any $s_1\in \Fil^1S$ and $m\in\cM$. A morphism of this category is defined to be a homomorphism of $S$-modules compatible with $\Fil^1$'s and $\varphi_1$'s. Note that this category also has an obvious notion of exact sequences. We drop the subscript $\cM$ of $\varphi_{1,\cM}$ if no confusion may occur. We let $\ModSf$ ({\it resp.} $\ModSffr$) denote the full subcategory of ${}'\Mod_{/S}^{1,\varphi}$ consisting of the objects $\cM$ such that $\cM$ is free of finite rank over $S_1$ ({\it resp.} $\cM$ is free of finite rank over $S$, $\cM/\Fil^1\cM$ is $p$-torsion free) and the image $\varphi_{1,\cM}(\Fil^1\cM)$ generates the $S$-module $\cM$. 

The categories of Breuil and Kisin modules are in fact equivalent. By the natural map $\SG\to S$, we consider the ring $S$ as an $\SG$-algebra. We define an exact functor $\cM_\SG(-):\ModSGf\to \ModSf$ by putting $\cM_\SG(\SGm)=S\otimes_{\varphi,\SG}\SGm$ with
\begin{align*}
&\Fil^1\cM_\SG(\SGm)=\Ker(S\otimes_{\varphi,\SG}\SGm\overset{1\otimes\varphi}{\to}(S_1/\Fil^1S_1)\otimes_{\SG}\SGm),\\
&\varphi_1:\Fil^1\cM_\SG(\SGm)\overset{1\otimes \varphi}{\to}\Fil^1S_1\otimes_{\SG}\SGm\overset{\varphi_1\otimes 1}{\to} S_1\otimes_{\varphi,\SG_1}\SGm=\cM_\SG(\SGm).
\end{align*}
Then the functor $\cM_\SG(-)$ is an equivalence of categories (\cite[Proposition 1.1.11]{Ki_BT}). We also have a similar equivalence for the categories $\ModSGffr$ and $\ModSffr$ (\cite[Theorem 2.2.1]{CL}).


\subsection{Classification of finite flat group schemes and ramification}

Set $\tokbar=\okbar/p\okbar$ and $\bC$ to be the completion of $\Kbar$. Consider the ring
\[
R=\varprojlim (\tokbar \leftarrow \tokbar \leftarrow \cdots),
\]
where every transition map is the $p$-th power map. For an element $x=(x_0,x_1,\ldots)\in R$ with $x_n\in\tokbar$, we put $x^{(0)}=\lim_{n\to\infty}\hat{x}_{n}^{p^n}\in\cO_\bC$, where $\hat{x}_n$ is a lift of $x_n$ in $\okbar$. Then the ring $R$ is a complete valuation ring of characteristic $p$ with valuation $v_R(x)=v_p(x^{(0)})$ whose fraction field $\Frac(R)$ is algebraically closed, and the absolute Galois group $G_K=\Gal(\Kbar/K)$ naturally acts on this ring. We put $m_R^{\geq i}=\{x\in R\mid v_R(x)\geq i\}$. Define an element $\upi$ of $R$ with $v_R(\upi)=1/e$ by $\upi=(\pi,\pi_1,\pi_2,\ldots)$, where we abusively write $\pi_n$ also for its image in $\tokbar$. The ring $R$ has a natural $\SG$-algebra structure defined by the continuous map $\SG\to R$ which sends $u$ to the element $\upi$. Then we have the following classification theorem due to Breuil and Kisin.

\begin{thm}[\cite{Br}, \cite{Br_AZ}, \cite{Ki_Fcrys}, \cite{Ki_BT}]
There exists an anti-equivalence $\Gr(-)$ from the category $\ModSf$ ({\it resp.} $\ModSffr$) to the category of finite flat group schemes over $\okey$ killed by $p$ ({\it resp.} Barsotti-Tate groups over $\okey$). Moreover, put $\cG(-)=\Gr(\cM_\SG(-))$. Then for any object $\SGm$ of the category $\ModSGf$, we have a natural isomorphism of $G_{K_\infty}$-modules
\[
\varepsilon_\SGm: \cG(\SGm)(\okbar)|_{G_{K_\infty}}\to \TSG(\SGm)=\Hom_{\SG,\varphi}(\SGm, R).
\]
\end{thm}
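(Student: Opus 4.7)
The statement is a compilation of results of Breuil and Kisin, and the plan is to assemble it from the literature in three layers.

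First, for the anti-equivalence $\Gr$ between $\ModSf$ and finite flat group schemes over $\okey$ killed by $p$, I would invoke Breuil's original classification (which requires $p > 2$). The construction attaches to such a $\cG$ its Dieudonn\'e crystal evaluated at the PD-thickening $S \to \okey$, which inherits a filtration from the conjugate Hodge filtration and a divided Frobenius from the crystalline Frobenius. Breuil proves this is an anti-equivalence by an inductive d\'evissage on the length of $\cG$, bootstrapping from the Fontaine--Laffaille classification in the unramified case. Composition with the equivalence $\cM_\SG$ recalled in the previous subsection transports this to the Kisin-module version. For the Barsotti--Tate case, I would appeal to Kisin's BT paper: any object of $\ModSGffr$ is recovered from its compatible tower of $p^n$-torsion quotients, and the corresponding system of finite flat group schemes assembles into a Barsotti--Tate group over $\okey$ by Messing deformation theory, with essential surjectivity following from Tate's theorem on $p$-divisible groups.

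Second, for the Galois-equivariant map $\varepsilon_\SGm$, the set-up is that $R$ is naturally an $\SG$-algebra via $u \mapsto \upi$, this map commutes with the Frobenius endomorphisms on both sides, and $G_{K_\infty}$ fixes each $\pi_n$ by the very definition of $K_\infty$ and hence fixes $\upi$; these observations make $\TSG(\SGm)$ a well-defined $G_{K_\infty}$-module. To construct $\varepsilon_\SGm$, I would exploit a common characteristic-$p$ description: from $\SGm$ one forms the finite flat group scheme $\cH(\SGm)$ over $k[[u]]$ whose $R$-points are tautologically $\Hom_{\SG,\varphi}(\SGm, R)$, and Kisin supplies a natural comparison $\cG(\SGm)(\okbar) \to \cH(\SGm)(R)$ obtained by reducing mod $p$ and passing through the canonical projection $R \to \tokbar$ compatible with the $\SG$-algebra structure $u \mapsto \upi$.

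The heart of the argument, and what I expect to be the main obstacle, is verifying that this comparison is a $G_{K_\infty}$-equivariant bijection. Bijectivity is a consequence of the fact that both $\cG(\SGm)(\okbar)$ and $\TSG(\SGm)$ are computed from $\SGm$ by the same $\varphi$-module-theoretic recipe once one matches the mixed- and equal-characteristic constructions of $\Gr$ and $\cH$. On the other hand, $G_{K_\infty}$-equivariance is not automatic: one must check that the comparison map intertwines the natural Galois action on $\cG(\SGm)(\okbar)$ with the semilinear action on $\Hom_{\SG,\varphi}(\SGm, R)$ induced from the action of $G_{K_\infty}$ on $R$. This is carried out in Kisin's Fcrys paper and reduces ultimately to the fact that $G_{K_\infty}$ acts trivially on the $\SG$-algebra structure $u \mapsto \upi$ of $R$.
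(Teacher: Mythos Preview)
The paper does not supply a proof of this theorem: it is stated with citations to \cite{Br}, \cite{Br_AZ}, \cite{Ki_Fcrys}, \cite{Ki_BT} and treated as an input from the literature. There is therefore no argument in the paper to compare your sketch against; your proposal already goes further than the paper by attempting to indicate how the cited results assemble.

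That said, one point in your sketch is imprecise. Your description of $\varepsilon_\SGm$ as ``obtained by reducing mod $p$ and passing through the canonical projection $R \to \tokbar$'' does not match how the comparison is actually built in the cited works. The map $\varepsilon_\SGm$ is not produced by a naive reduction; in Kisin's framework one identifies $\cG(\SGm)(\okbar)$ with a suitable $\Hom$ into a period ring (built from $\Acrys$ or its variants) and then matches that with $\Hom_{\SG,\varphi}(\SGm,R)$ via the $\SG$-algebra structure on $R$. The paper itself, immediately after the theorem, records separately that $\cH(\SGm)(R)\simeq\Hom_{\SG,\varphi}(\SGm,R)$ tautologically and then \emph{deduces} the isomorphism $\cG(\SGm)(\okbar)|_{G_{K_\infty}}\to\cH(\SGm)(R)$ by composing with $\varepsilon_\SGm$, rather than using $\cH(\SGm)$ to construct $\varepsilon_\SGm$ as you suggest. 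This is a matter of logical order rather than a fatal gap, but as written your account of the construction would not stand on its own.
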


The anti-equivalence $\cG(-)$ is compatible with Cartier duality in the following sense. For an object $\SGm$ of the category $\ModSGf$, we can define a natural dual object $\SGm^\vee$ (\cite{CL}, \cite{Li_FC}), as follows. Put $\SGm^\vee=\Hom_{\SG}(\SGm,\SG_1)$. Choose a basis $e_1,\ldots, e_h$ of the free $\SG_1$-module $\SGm$ and let $e^{\vee}_{1},\ldots,e^{\vee}_{h}$ denote its dual basis. Define a matrix $A\in M_h(\SG_1)$ by
\[
\varphi_{\SGm}(e_1,\ldots,e_h)=(e_1,\ldots,e_h)A.
\]
The $\varphi$-semilinear map $\varphi_{\SGm^\vee}:\SGm^\vee\to\SGm^\vee$ is defined to be
\[
\varphi_{\SGm^\vee}(e^{\vee}_{1},\ldots,e^{\vee}_{h})=(e^{\vee}_{1},\ldots,e^{\vee}_{h})(E(u)/c_0)({}^\mathrm{t}\!A)^{-1},
\]
which is independent of the choice of the basis $e_1,\ldots,e_h$. Then we have a natural isomorphism of finite flat group schemes over $\okey$
\[
\cG(\SGm)^\vee\to \cG(\SGm^\vee),
\]
where $\vee$ on the left-hand side means the Cartier dual (see \cite[Proposition 4.4]{Ha4}). This defines an isomorphism of $G_{K_\infty}$-modules
\[
\delta_\SGm: \cG(\SGm)^\vee(\okbar) \to \cG(\SGm^\vee)(\okbar)\overset{\varepsilon_{\SGm^\vee}}{\to}  \TSG(\SGm^\vee).
\]
Let $\SG_1^\vee$ be the object of the category $\ModSGf$ whose underlying $\SG$-module is $\SG_1$ and Frobenius map is given by $\varphi_{\SG_1^\vee}(1)=c_0^{-1}E(u)$. Then the natural pairing 
\[
\langle\ ,\ \rangle_\SGm: \SGm\times \SGm^\vee \to \SG_1^\vee
\]
induces a perfect pairing of $G_{K_\infty}$-modules
\[
\TSG(\SGm)\times \TSG(\SGm^\vee)\to \TSG(\SG_1^\vee),
\]
which is denoted also by $\langle\ ,\ \rangle_\SGm$. This pairing fits into a commutative diagram of $G_{K_\infty}$-modules
\[
\xymatrix{
\cG(\SGm)(\okbar)\times \cG(\SGm)^\vee(\okbar)\ar[r]\ar@<-6ex>[d]^{\wr}_{\varepsilon_\SGm}\ar@<+6ex>[d]^{\wr}_{\delta_\SGm} & \bZ/p\bZ(1) \ar[d]^{\wr}\\
\TSG(\SGm)\times\TSG(\SGm^\vee)\ar[r]_-{\langle\ ,\ \rangle_\SGm} & \TSG(\SG_1^\vee),
}
\]
where the top arrow is the Cartier pairing of $\cG(\SGm)$ (\cite[Proposition 4.4]{Ha4}).

On the other hand, for a $k[[u]]$-algebra $B$, we also have an anti-equivalence of categories $\cH(-)$ from $\mathrm{Mod}_{/B}^{1,\varphi}$ to the category of finite locally free group schemes $\cH$ over $B$ whose Verschiebung $V_\cH$ is zero such that the cokernel of the induced map $V_{\cH^\vee}:\Lie(\cH^\vee)^{(p)}\to \Lie(\cH^\vee)$ is killed by $u^e$ (\cite[Th\'eor\`{e}me 7.4]{SGA3-7A}). Note that the anti-equivalence $\cH(-)$ commutes with any base change. Suppose that the $B$-module $\SGm$ is free of rank $h$. Let $e_1,\ldots,e_h$ be its basis and put $\varphi(e_1,\ldots,e_h)=(e_1,\ldots,e_h)A$ for some matrix $A=(a_{i,j})\in M_h(B)$. Then the group scheme $\cH(\SGm)$ is by definition the additive group scheme whose affine algebra is
\[
B[X_1,\ldots,X_h]/(X_1^p-\sum_{j=1}^h a_{j,1}X_j,\ldots, X_h^p-\sum_{j=1}^h a_{j,h}X_j).
\]
Moreover, for the case of $B=k[[u]]$, we have a natural isomorphism of $G_{K_\infty}$-modules
\[
\cH(\SGm)(R)\to \Hom_{\SG,\varphi}(\SGm, R),
\]
by which we identify both sides. Hence we obtain the isomorphism of $G_{K_\infty}$-modules
\[
\cG(\SGm)(\okbar)|_{G_{K_\infty}}\to \cH(\SGm)(R).
\]
We normalize the indices of the ramification subgroup schemes of $\cH(\SGm)$ by multiplying the usual ones by $1/e$, so that
\[
\cH(\SGm)_i(R)=\Ker(\cH(\SGm)(R)\to\cH(\SGm)(R/m_R^{\geq i})). 
\]
Then we have the following correspondence of ramification between the finite flat group schemes $\cG(\SGm)$ and $\cH(\SGm)$.

\begin{thm}\label{ramcorr}
Let $\SGm$ be an object of the category $\ModSGf$.
\begin{enumerate}
\item\label{ramcorr1} (\cite[Theorem 1.1]{Ha4}) The natural isomorphism of $G_{K_\infty}$-modules
\[
\cG(\SGm)(\okbar)|_{G_{K_\infty}}\to \cH(\SGm)(R)
\]
preserves the upper and the lower ramification subgroups of both sides. 
\item\label{ramcorr2} (\cite[Corollary 4.6]{Ha4}) By the $k$-algebra isomorphism $k[[u^{1/p}]]/(u^e)\to \okp/p\okp$ sending $u^{1/p}$ to $\pi_1$, we identify both sides of the isomorphism. Then there exists an isomorphism of schemes
\[
\cG(\SGm)\times \Spec(\okp/p\okp) \to \cH(\SGm)\times \Spec(k[[u^{1/p}]]/(u^e))
\]
which preserves the zero section.
\end{enumerate}
\end{thm}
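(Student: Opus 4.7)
The plan is to produce explicit affine presentations of the two group schemes from the matrix $A = (a_{i,j})$ describing the Frobenius on a chosen basis $e_1, \ldots, e_h$ of $\SGm$, and then transport both the ramification filtrations and the mod-$p$ structure between them. The affine algebra of $\cH(\SGm)$ is already given by the excerpt: it is $k[[u]][X_1, \ldots, X_h]/(X_i^p - \sum_j a_{j,i} X_j)$. To obtain a comparable description for $\cG(\SGm)$, I would pass through the Breuil module $\cM = \cM_\SG(\SGm) = S \otimes_{\varphi, \SG}\SGm$, pick an $S_1$-basis of $\Fil^1 \cM$ adapted to $A$ (each generator of the form $E(u)(1 \otimes e_i) - \sum_j c_{i,j}(u)(1\otimes e_j)$), and unfold Breuil's functor $\Gr(-)$ to obtain an $\okey$-algebra presentation of the shape $\okey[Y_1, \ldots, Y_h]/(\text{relations})$ whose relations are a perturbation of those defining $\cH(\SGm)$ by terms controlled by $E(u)$.

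For part (2), the observation is that after reducing modulo $p$ the $E(u)$-perturbations become controlled by $u^e$ (since $E(u) \equiv u^e \pmod{p}$ up to a unit), so both presentations collapse to a common form under the $k$-algebra isomorphism $k[[u^{1/p}]]/(u^e) \to \okp/p\okp$, $u^{1/p} \mapsto \pi_1$. The appearance of $u^{1/p}$ rather than $u$ is not cosmetic: the Breuil module $\cM_\SG(\SGm)$ is a $\varphi$-twist of $\SGm$, so one needs a $p$-th root of $u$ to align the matrix $A$ on the two sides. With a careful choice of basis this matching can be made explicit, yielding the scheme isomorphism preserving the zero section.

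For part (1), both the upper and lower ramification subgroups are defined by Abbes--Saito: the lower filtration by valuative conditions on $\okbar$-points of the affine algebra, and the upper filtration via geometric connected components of rigid tubular neighborhoods of the unit section. Given the presentations above, the natural map $\cG(\SGm)(\okbar)|_{G_{K_\infty}} \to \cH(\SGm)(R)$ can be described coordinate-wise: a tuple $(x_i) \in \okbar^h$ satisfying the $\okey$-relations corresponds to a tuple $(y_i) \in R^h$ satisfying the $k[[u]]$-relations under the base change $\SG \to R$, $u \mapsto \upi$, with $v_p(x_i) = v_R(y_i)$ for every coordinate. This valuation-preserving bijection converts Abbes--Saito tubular neighborhoods on one side into those on the other, and hence transports both filtrations.

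The main obstacle is securing a sufficiently clean and correct presentation of $\cG(\SGm)$ from the Breuil--Kisin data. The anti-equivalence $\Gr(-)$ is not wholly formulaic, and writing out the relations in a form suitable for Abbes--Saito analysis requires a fine tracking of the interaction between $\Fil^1 \cM$, $\varphi_1$, and the matrix $A$, together with integral control of the $E(u)$-error terms (not merely modulo $p$) for the ramification comparison in part (1). Once such a presentation is in place, both parts reduce to essentially computational comparisons over the appropriate base change rings.
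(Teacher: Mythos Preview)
The paper does not prove this theorem at all: both parts are quoted as results from the author's earlier paper \cite{Ha4} (Theorem~1.1 and Corollary~4.6 there), and no argument is given here. There is therefore no in-paper proof against which to compare your proposal.

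As for the soundness of your sketch on its own terms, the outline for part~(\ref{ramcorr2}) is plausible and is indeed close in spirit to how such a comparison goes: one writes down an explicit affine presentation for $\cG(\SGm)$ from Breuil's theory and matches it, after reduction modulo~$p$ and the Frobenius twist accounting for the $u^{1/p}$, with the presentation of $\cH(\SGm)$. The obstacle you flag---extracting a usable explicit presentation of $\cG(\SGm)$ from $\Gr(-)$---is real and is where most of the work in \cite{Ha4} lies.

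For part~(\ref{ramcorr1}), however, your sketch glosses over the key difficulty. You assert that the natural isomorphism $\varepsilon_\SGm$ ``can be described coordinate-wise'' with $v_p(x_i)=v_R(y_i)$ for each coordinate, and that this immediately transports Abbes--Saito tubular neighborhoods. But $\varepsilon_\SGm$ is defined abstractly via the Breuil--Kisin equivalence, not by an a priori coordinate matching; showing that it is induced by, or compatible with, a valuation-preserving bijection of affine coordinates is precisely the content of the theorem, not a formal consequence of having presentations on each side. Moreover, even granting a coordinate description, the \emph{upper} ramification filtration is defined via connected components of rigid tubular neighborhoods, and a pointwise valuation match on $\Kbar$-points does not by itself identify these components across the mixed/equal-characteristic divide. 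The actual argument in \cite{Ha4} requires a more careful comparison of the tubular-neighborhood geometry, not merely of valuations of solutions.
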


For a finite flat group scheme $\cG$ over $\okey$ which is killed by $p$, the upper and the lower ramification subgroups are in duality as in the following theorem.

\begin{thm}[\cite{Ti}, Theorem 1.6 or \cite{Fa}, Proposition 6]\label{TF}
Let $K/\bQ_p$ be an extension of complete discrete valuation fields and $\cG$ be a finite flat group scheme over $\okey$ killed by $p$. For $j\leq p/(p-1)$, we have the equality
\[
\cG^j(\okbar)^{\bot}=(\cG^\vee)_{l(j)+}(\okbar)
\]
of subgroups of $\cG^\vee(\okbar)$, where $\bot$ means the orthogonal subgroup with respect to the Cartier pairing and $l(j)=1/(p-1)-j/p$.
\end{thm}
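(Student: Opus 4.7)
The plan is to identify both sides of the equality with Hodge-Tate-type conditions and read off the duality from the Cartier pairing, in the spirit of the original arguments of Tian and Fargues.

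First, I would reformulate the lower ramification subgroup via the conormal sheaf. By definition, a point $x^{\vee}\in\cG^{\vee}(\okbar)$ lies in $(\cG^{\vee})_{l(j)+}(\okbar)$ iff the corresponding $\okey$-algebra homomorphism $x^{\vee}:A_{\cG^{\vee}}\to\okbar$ agrees with the augmentation modulo $m_{\Kbar}^{>l(j)}$, i.e., sends the augmentation ideal $I_{\cG^{\vee}}$ into $m_{\Kbar}^{>l(j)}$. Since $j\leq p/(p-1)$ forces $l(j)\leq 1/(p-1)$, and since $\cG^{\vee}$ is killed by $p$ one has $I_{\cG^{\vee}}^{p}\subseteq pA_{\cG^{\vee}}$ modulo controlled errors, the condition on $x^{\vee}$ is detected solely by the induced map on $I_{\cG^{\vee}}/I_{\cG^{\vee}}^{2}=\omega_{\cG^{\vee}}$. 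This reduces the membership $x^{\vee}\in(\cG^{\vee})_{l(j)+}(\okbar)$ to a condition on the image of $x^{\vee}$ under a cotangent-space evaluation map.

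Next I would translate this to a condition on the Cartier pairing. The dual point $x^{\vee}$ is an $\okbar$-homomorphism $\cG\to\mu_{p}$, and for $y\in\cG(\okbar)$ the pairing $\langle y,x^{\vee}\rangle\in\mu_{p}(\okbar)$ is $y^{*}(x^{\vee})$. Applying $\log(1+\cdot)$, which is well-defined and an isometry on $1+m_{\Kbar}^{>1/(p-1)}\subset\okbar^{\times}$ but acquires an explicit defect of $1/(p-1)$ coming from the class of $\zeta_{p}-1$, identifies $\log\langle y,x^{\vee}\rangle$ with the Hodge-Tate pairing $\HT(y)\cdot x^{\vee}$, evaluated through the cotangent. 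Tracking valuations, $x^{\vee}(I_{\cG^{\vee}})\subseteq m_{\Kbar}^{>l(j)}$ becomes equivalent to $\langle y,x^{\vee}\rangle=1$ for all $y$ killed by $\HT_{l(j)+1/(p-1)}$, and the shift $l(j)=1/(p-1)-j/p$ is produced exactly by the logarithm.

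Finally I would invoke Fontaine's description of the upper ramification of finite flat $p$-torsion group schemes: in the range $j\leq p/(p-1)$ the Abbes-Saito filtration on $\cG$ is controlled by the first jet of the augmentation ideal at the unit section, and this matches the kernel of $\HT_{l(j)+1/(p-1)}$ on the dual side. Combining the three steps yields $\cG^{j}(\okbar)^{\perp}=(\cG^{\vee})_{l(j)+}(\okbar)$.

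The principal obstacle is the bookkeeping at the boundary $j=p/(p-1)$: this is precisely the threshold at which two simplifications break down simultaneously—higher powers of the augmentation ideal start to contribute to the Abbes-Saito filtration of $\cG$, and the logarithm on $\mu_{p}(\okbar)$ ceases to be an isometry without corrections. Showing that both failures occur at the same point, and that the correction is exactly the affine shift $l(j)=1/(p-1)-j/p$, is the main technical work; it is what forces the hypothesis $j\leq p/(p-1)$ to be sharp.
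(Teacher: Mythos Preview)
The paper does not give its own proof of this theorem; it is quoted with attribution to \cite{Ti} and \cite{Fa} and used as a black box throughout. There is therefore nothing in the paper to compare your sketch against.

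Regarding the sketch itself, the decisive gap is in your third step. You write that ``in the range $j\leq p/(p-1)$ the Abbes--Saito filtration on $\cG$ is controlled by the first jet of the augmentation ideal at the unit section, and this matches the kernel of $\HT_{l(j)+1/(p-1)}$ on the dual side.'' But that assertion is essentially the theorem you are trying to prove, not an independent ingredient you can invoke. The Abbes--Saito upper filtration is defined through connected components of tubular neighborhoods of a chosen presentation; showing that in this range it depends only on first-order data, and that the dependence is exactly dual to the lower filtration via the Cartier pairing with the shift $l(j)=1/(p-1)-j/p$, is precisely the content of the result. In the cited sources the link is established by genuine computation: Tian works through explicit Breuil modules and congruence-group calculations, and Fargues argues via his Hodge--Tate map together with degree and valuation estimates. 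Neither reduces the statement to a pre-existing ``Fontaine description'' of the upper filtration.

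Your first step also needs repair. The claim that membership in $(\cG^{\vee})_{l(j)+}$ is detected purely on $\omega_{\cG^{\vee}}=I_{\cG^{\vee}}/I_{\cG^{\vee}}^{2}$ is not justified by what you wrote: the value $x^{\vee}(f)$ for $f\in I_{\cG^{\vee}}$ is not determined by the class of $f$ modulo $I_{\cG^{\vee}}^{2}$, and the containment $I_{\cG^{\vee}}^{p}\subseteq pA_{\cG^{\vee}}$ does not follow from $\cG^{\vee}$ being killed by $p$. One can make a cotangent-space reduction work, but it requires an actual argument (for instance via a suitable presentation of the affine algebra), and that argument is part of the substance of the proof rather than a preliminary observation.
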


We insert here a lemma which gives an upper bound of the lower ramification for finite flat group schemes killed by $p$. For an extension $\cK/k((u))$ of complete discrete valuation fields, let $v_u$ be the $u$-adic valuation on $\cK$ normalized as $v_u(u)=1$. Let $\cG$ be a finite flat generically etale group scheme over $\mathcal{O}_{\cK}$. Fix a positive integer $m$ and we normalize the indices of the ramification subgroup schemes of $\cG$ by multiplying the usual ones by $1/m$. Write $\omega_\cG\simeq \oplus_i \mathcal{O}_{\cK}/(a_i)$ for some $a_i \in \mathcal{O}_{\cK}$ and put $\deg(\cG)=m^{-1}\sum_iv_u(a_i)$. In particular, for an object $\SGm$ of the category $\ModSGf$, we define $\deg(\cH(\SGm))$ by putting $m=e$.

\begin{lem}\label{lowramdeg}
Let $\cK/\bQ_p$ ({\it resp.} $\cK/k((u))$) be an extension of complete discrete valuation fields and $\cG$ be a finite flat generically etale group scheme over $\cO_\cK$ killed by $p$. Then $\cG_i=0$ for any $i>\deg(\cG)/(p-1)$.
\end{lem}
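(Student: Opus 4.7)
The plan is to reduce to the Oort--Tate case (group schemes of order $p$) by induction on the order of $\cG$, and to handle that case directly. Since $\cO_\cK$ contains $\mu_{p-1}$ in both situations (via Teichm\"uller lifts in mixed characteristic, and since $\bF_p \subset \cO_\cK$ in equal characteristic), Raynaud's classification of finite flat commutative $\bF_p$-vector space schemes produces a composition series
\[
0 = \cG^{(0)} \subset \cG^{(1)} \subset \cdots \subset \cG^{(h)} = \cG
\]
whose successive quotients are of Oort--Tate form $\Spec \cO_\cK[X]/(X^p - aX)$ for some $a \in \cO_\cK$ (nonzero, by generic \'etaleness). For such a group scheme, the augmentation ideal $I = (X)$ satisfies $I^2 = (X^2) + (aX)$, so $\omega_\cG \cong \cO_\cK/(a)$, $\deg(\cG) = m^{-1} v(a)$, and a nonzero $\cO_{\bar{\cK}}$-point satisfies $x^{p-1} = a$, giving normalized valuation $v(x)/m = \deg(\cG)/(p-1)$. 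This handles the base case.

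For the induction step, take the exact sequence $0 \to \cG^{(1)} \to \cG \to \cG/\cG^{(1)} \to 0$, which yields degree additivity $\deg(\cG) = \deg(\cG^{(1)}) + \deg(\cG/\cG^{(1)})$ via the corresponding short exact sequence of conormal modules at the identity. Given $x \in \cG_i(\cO_{\bar{\cK}})$ with $i > \deg(\cG)/(p-1)$, its image in $(\cG/\cG^{(1)})(\cO_{\bar{\cK}})$ lies in $(\cG/\cG^{(1)})_i$ and vanishes by the induction hypothesis, as $\deg(\cG/\cG^{(1)}) \leq \deg(\cG)$. Hence $x$ factors through $\cG^{(1)}$; because $I_{\cG^{(1)}}$ is a quotient of $I_\cG$, the point $x$ also lies in $\cG^{(1)}_i$ when viewed as a point of $\cG^{(1)}$, and the base case forces $x = 0$.

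The main obstacle is invoking Raynaud's classification in the required generality, especially when the residue field of $\cK$ is imperfect. An alternative in the equal characteristic case, when $\cG = \cH(\SGm)$, is a matrix computation: the defining equations read $F(x) = A\cdot x$ for $A$ the Frobenius matrix, so $x = A^{-1}F(x)$; since the entries of $A^{-1}$ have valuation at least $-v(\det A) = -m\deg(\cG)$, one obtains $(p-1)v(x) \leq m\deg(\cG)$ directly. The mixed characteristic case can then be recovered from this computation via the ramification correspondence of Theorem~\ref{ramcorr}~\eqref{ramcorr1}.
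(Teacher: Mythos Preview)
Your reduction to the Oort--Tate case is the right idea and matches the paper, but the route you take to get there is both more complicated than necessary and not quite correct as written. The claim that Raynaud's classification produces a composition series over $\cO_\cK$ is false in general: if the Galois module $\cG(\cO_{\cK^{\mathrm{sep}}})$ is irreducible of $\bF_p$-dimension $>1$ (as happens already for $E[p]$ over $\bZ_p$ with $E$ a non-CM elliptic curve), there is no finite flat closed subgroup scheme of order $p$ over $\cO_\cK$, so no such filtration exists. This is repairable by first passing to a finite extension over which all points become rational (the normalized lower ramification filtration and $\deg$ are insensitive to this), but you do not say so, and once you do, invoking Raynaud is superfluous. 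Your alternative matrix argument covers only group schemes of the form $\cH(\SGm)$, i.e.\ those with $V=0$, and the transfer to mixed characteristic via Theorem~\ref{ramcorr} needs perfect residue field; neither hypothesis is part of the lemma.

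The paper's argument is shorter and sidesteps all of this. After the same base change (so that $\cG(\cO_{\cK^{\mathrm{sep}}})=\cG(\cO_\cK)$), take any $x\in\cG_i(\cO_\cK)$ and let $\cH$ be the scheme-theoretic closure of the subgroup $\bF_p x$ in $\cG$. Then $\cH$ is finite flat of order $p$ with $\deg(\cH)\leq\deg(\cG)$, and Oort--Tate applied to $\cH$ alone gives $\cH_i=0$, hence $x=0$. No composition series, no induction: you only ever need the one order-$p$ subgroup determined by the point you are trying to kill.
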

\begin{proof}
Let $\cK^{\mathrm{sep}}$ be a separable closure of $\cK$. We may assume $\cG(\cO_{\cK^{\mathrm{sep}}})=\cG(\cO_\cK)$. For $i$ as in the lemma, put $x\in \cG_i(\cO_\cK)$ and let $\cH$ be the scheme-theoretic closure of the subgroup $\bF_px\subseteq \cG(\cO_\cK)$ in $\cG$. Then we have $\deg(\cH)\leq \deg(\cG)$. By the Oort-Tate classification (\cite{OT}), the affine algebra of $\cH$ is isomorphic to the ring $\cO_\cK[X]/(X^p-aX)$ for some $a\in \cO_\cK$ with $\deg(\cH)=v_p(a)$ ({\it resp.} $\deg(\cH)=m^{-1}v_u(a)$). Hence we obtain $\cH_i=0$ and $x=0$.
\end{proof}


\subsection{Hodge filtration and Breuil-Kisin modules}\label{Hfil}

Finally, due to the lack of references, we explain how to decode the Hodge filtration, the Hodge height and the Hodge-Tate map for a truncated Barsotti-Tate group of level one over $\okey$ from its corresponding Breuil-Kisin module. Put $\sS_1=\Spec(\tokey)$ and $E_1=\Spec(S_1)$. Consider the big crystalline site with the fppf topology $(\sS_1/E_1)_\CRYS$, as in \cite{BBM}. For an fppf sheaf $\cE$ over $\Spec(\okey)$, we let $\cE_1$ denote its restriction to $\Spec(\tokey)$ only in this subsection, and for an fppf sheaf $\cF$ over $\Spec(\tokey)$, let $\underline{\cF}$ denote the associated sheaf on the site $(\sS_1/E_1)_\CRYS$ (\cite[(1.1.4.5)]{BBM}). We also write the crystalline Dieudonn\'{e} functor as  $\bD^{*}(-)=\sExt^1_{\sS_1/E_1}(\underline{-},\cO_{\sS_1/E_1})$ (\cite[D\'{e}finition 3.1.5]{BBM}). Let $v_u$ be the $u$-adic valuation on the ring $k[[u]]$ with $v_u(u)=1$, as before. By the $k$-algebra isomorphism $k[[u]]/(u^e)\to \tokey$ sending $u$ to $\pi$, we identify both sides of the isomorphism.

\begin{lem}\label{invdiff}
\begin{enumerate}
\item \label{invdiff-BT}
Let $\cG$ be a truncated Barsotti-Tate group of level one over $\okey$ and $\cM$ be the object of $\ModSf$ which corresponds to $\cG$ via the anti-equivalence $\Gr(-)$. Then there exist natural isomorphisms of $\tokey$-modules
\[
\Fil^1\cM/(\Fil^1S)\cM\to \omega_{\cG_1},\quad \cM/\Fil^1\cM\to \Lie(\cG^\vee_1).
\]
\item \label{invdiff-p}
Let $\cG$ be a finite flat group scheme over $\okey$ killed by $p$ and $\cM$ be its corresponding object of the category $\ModSf$. Then we have a natural isomorphism of $\tokey$-modules $\Fil^1\cM/(\Fil^1S)\cM\to \omega_{\cG_1}$. 

\item \label{invdiff-deg}
Let $\SGm$ be an object of the category $\ModSGf$. Then we have $\deg(\cG(\SGm))=\deg(\cH(\SGm))=e^{-1}v_u(\det(\varphi_\SGm))$.
\end{enumerate}
\end{lem}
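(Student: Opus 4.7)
The plan is to treat the three parts in turn, using Breuil's comparison with crystalline Dieudonn\'{e} theory for (\ref{invdiff-BT}) and (\ref{invdiff-p}), and combining Theorem \ref{ramcorr}(\ref{ramcorr2}) with a direct computation on $\cH(\SGm)$ for (\ref{invdiff-deg}).

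For (\ref{invdiff-BT}), the essential input is that Breuil's classification refines crystalline Dieudonn\'{e} theory: for a truncated Barsotti-Tate group $\cG$ of level one, the Breuil module $\cM$ is canonically isomorphic to the value $\bD^{*}(\underline{\cG_1})(S_1)$ of the crystalline Dieudonn\'{e} crystal at the PD-thickening $S_1 \to \tokey$, and under this isomorphism $\Fil^1\cM$ is the preimage of the Hodge filtration modulo $\Fil^1 S_1$. Consequently $\cM/(\Fil^1 S)\cM \cong \bD^{*}(\underline{\cG_1})(\tokey)$, and the standard Hodge exact sequence
\[
0\to \omega_{\cG_1}\to \bD^{*}(\underline{\cG_1})(\tokey)\to \Lie(\cG_1^\vee)\to 0
\]
from \cite{BBM} produces both required isomorphisms. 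For (\ref{invdiff-p}), the identification $\Fil^1\cM/(\Fil^1 S)\cM \cong \omega_{\cG_1}$ for a general $p$-torsion $\cG$ is built into Breuil's explicit construction of the affine algebra of $\Gr(\cM)$, whose augmentation ideal modulo its square is naturally isomorphic to $\Fil^1\cM/(\Fil^1 S)\cM$; alternatively, one may reduce to (\ref{invdiff-BT}) by embedding $\cG$ as a closed subgroup of a truncated BT of level one over $\okey$ and applying a snake-lemma argument based on the right-exactness of both functors.

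For (\ref{invdiff-deg}), choose a basis $e_1,\ldots,e_h$ of $\SGm$ in which $\varphi_\SGm$ is represented by a matrix $A=(a_{i,j})\in M_h(k[[u]])$. By the definition of $\cH(\SGm)$, its affine algebra is $k[[u]][X_1,\ldots,X_h]/(X_i^p-\sum_j a_{j,i}X_j)$ and its augmentation ideal modulo its square equals $\Coker({}^\mathrm{t}A\colon k[[u]]^h\to k[[u]]^h)$. Smith normal form over the DVR $k[[u]]$ then gives $\dim_k\omega_{\cH(\SGm)} = v_u(\det A)=v_u(\det\varphi_\SGm)$, whence $\deg\cH(\SGm)=e^{-1}v_u(\det\varphi_\SGm)$. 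To obtain $\deg\cG(\SGm)=\deg\cH(\SGm)$, apply Theorem \ref{ramcorr}(\ref{ramcorr2}): by functoriality of conormal sheaves under isomorphisms of pointed schemes, it yields an isomorphism
\[
\omega_{\cG(\SGm)}\otimes_{\okey}(\okp/p\okp)\cong \omega_{\cH(\SGm)}\otimes_{k[[u]]}k[[u^{1/p}]]/(u^e).
\]
Now $\omega_{\cG(\SGm)}$ is killed by $p$, while $\omega_{\cH(\SGm)}$ is killed by $u^e$ because $E(u)$ is a unit multiple of $u^e$ in $k[[u]]$; and both $\okp/p\okp$ and $k[[u^{1/p}]]/(u^e)$ are free of rank $p$ over $\tokey=k[[u]]/(u^e)$. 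Comparing $k$-dimensions of the two sides forces $\dim_k\omega_{\cG(\SGm)}=\dim_k\omega_{\cH(\SGm)}$, i.e.\ $\deg\cG(\SGm)=\deg\cH(\SGm)$, completing the proof.

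The main obstacle is expected to be the precise matching of $\Fil^1\cM$ with the Hodge filtration in Breuil's formalism in (\ref{invdiff-BT}), together with its extension to arbitrary $p$-torsion group schemes in (\ref{invdiff-p}); both require careful bookkeeping within Breuil's constructions and the conventions of \cite{BBM}. Once these identifications are in place, the computation for (\ref{invdiff-deg}) is essentially a matrix calculation coupled with the zero-section comparison of Theorem \ref{ramcorr}(\ref{ramcorr2}).
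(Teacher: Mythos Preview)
Your outline for parts (\ref{invdiff-BT}) and (\ref{invdiff-p}) matches the paper's strategy. One point worth making explicit: the comparison $\cM\cong\bD^{*}(\underline{\cG_1})(S_1)$ compatible with Hodge filtrations, which you invoke directly for a truncated Barsotti-Tate group of level one, is available in the cited references (\cite{Ki_Fcrys}) only at the level of full Barsotti-Tate groups. The paper bridges this by first lifting $\cG$ to a Barsotti-Tate group $\Gamma$ over $\okey$ with $\Gamma[p]\simeq\cG$ via \cite[Th\'eor\`eme 4.4(e)]{Il}, reading off the Hodge exact sequence for $\Gamma$, and reducing modulo $p$. Your alternative (b) for (\ref{invdiff-p}) is likewise exactly what the paper does, phrased there as a resolution $0\to\cG\to\Gamma\to\Gamma'\to 0$ by Barsotti-Tate groups followed by a right-exactness argument.

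For part (\ref{invdiff-deg}) your argument is correct but takes a genuinely different route. The paper stays on the Breuil side and uses the just-proved (\ref{invdiff-p}): it computes $\Fil^1\cM_\SG(\SGm)/(\Fil^1S)\cM_\SG(\SGm)=\Span_{\SG_1}\bigl((1\otimes e_i)\,u^eA^{-1}\bigr)$, whose elementary divisors over $\tokey$ are $u^{e-s_i}$ when those of $A$ are $u^{s_i}$, and hence obtains $\omega_{\cG(\SGm)_1}\simeq\bigoplus_i u^{e-s_i}k[[u]]/u^ek[[u]]$ directly. You instead compute $\omega_{\cH(\SGm)}$ from the presentation of its affine algebra and transfer to $\cG(\SGm)$ via the pointed-scheme comparison of Theorem~\ref{ramcorr}(\ref{ramcorr2}), comparing $k$-dimensions after the faithfully flat base change to $\okp/p\okp\cong k[[u^{1/p}]]/(u^e)$. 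Your route is tidy and decouples (\ref{invdiff-deg}) from (\ref{invdiff-p}), at the cost of importing the deeper result from \cite{Ha4}; the paper's route is self-contained once (\ref{invdiff-p}) is in hand and yields the explicit structure of $\omega_{\cG(\SGm)_1}$ as a byproduct.
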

\begin{proof}
Let us consider the assertion (\ref{invdiff-BT}). By \cite[Th\'{e}or\`{e}me 4.4 (e)]{Il}, we can find a Barsotti-Tate group $\Gamma$ over $\okey$ such that its $p$-torsion subgroup scheme $\Gamma[p]$ is isomorphic to $\cG$. Let $\cN$ be the object of the category $\ModSffr$ corresponding to $\Gamma$ and put $\cN_1=\cN/p\cN$, which is naturally considered as an object of the category $\ModSf$. By the construction of the anti-equivalence (\cite{Ki_Fcrys}), we have a natural isomorphism of exact sequences
\[
\xymatrix{
0 \ar[r] & \Fil^1\cN_1/(\Fil^1S)\cN_1 \ar[r]\ar[d]_{\wr} & \cN_1/(\Fil^1S)\cN_1 \ar[r]\ar[d]_{\wr}  & \cN_1/\Fil^1\cN_1 \ar[r]\ar[d]_{\wr} & 0\\
0 \ar[r] & \omega_{\Gamma_1} \ar[r] & \bD^{*}(\Gamma_1)_{\tokey} \ar[r] & \Lie(\Gamma^\vee_1) \ar[r] & 0,
}
\]
where $\bD^{*}(\Gamma_1)_{\tokey}$ is the set of sections on the divided power thickening $\tokey \to \tokey$. Thus we also have the natural isomorphism
\[
\xymatrix{
0 \ar[r] & \Fil^1\cM/(\Fil^1S)\cM \ar[r]\ar[d]_{\wr} & \cM/(\Fil^1S)\cM \ar[r]\ar[d]_{\wr}  & \cM/\Fil^1\cM \ar[r]\ar[d]_{\wr} & 0\\
0 \ar[r] & \omega_{\cG_1} \ar[r] & \bD^{*}(\cG_1)_{\tokey} \ar[r] & \Lie(\cG_1^\vee) \ar[r] & 0.
}
\]
 
For the assertion (\ref{invdiff-p}), take a resolution 
\[
0\to \cG\to\Gamma\to \Gamma'\to 0
\]
of $\cG$ by Barsotti-Tate groups $\Gamma$ and $\Gamma'$ over $\okey$. Let $\cN$ and $\cN'$ be the objects of the category $\ModSffr$ corresponding to $\Gamma$ and $\Gamma'$, respectively. Then we have exact sequences
\[
0\to \omega_{\Gamma'}\to \omega_{\Gamma}\to \omega_{\cG}\to 0,\quad 0\to \cN'\to \cN \to \cM \to 0.
\]
By tensoring $\tokey$ to the first sequence and using the assertion (\ref{invdiff-BT}), we obtain an exact sequence
\[
\Fil^1\cN'/(p\Fil^1\cN'+(\Fil^1S)\cN')\to \Fil^1\cN/(p\Fil^1\cN+(\Fil^1S)\cN)\to \omega_{\cG_1}\to 0.
\]
This and the second sequence induce the isomorphism $\Fil^1\cM/(\Fil^1S)\cM\to \omega_{\cG_1}$.

For the assertion (\ref{invdiff-deg}), choose a basis $e_1,\ldots,e_h$ of $\SGm$ and put
\[
\varphi(e_1,\ldots,e_h)=(e_1,\ldots,e_h)A
\]
for some $A\in M_h(k[[u]])$. Let $u^{s_1},\ldots,u^{s_h}$ be the elementary divisors of $A$, which satisfy $0\leq s_i\leq e$ for any $i$. Note that the matrix $u^eA^{-1}$ is contained in $M_h(k[[u]])$ and its elementary divisors are $u^{e-s_1},\ldots,u^{e-s_h}$. By the definition of the functor $\cM_\SG(-)$, we have the equality
\[
\Fil^1\cM_\SG(\SGm)/(\Fil^1S)\cM_\SG(\SGm)=\Span_{\SG_1}((1\otimes e_1,\ldots,1\otimes e_h)u^eA^{-1}).
\]
Thus, by the assertion (\ref{invdiff-p}), we obtain an isomorphism 
\[
\omega_{\cG(\SGm)_1}\simeq \oplus_i u^{e-s_i}k[[u]]/u^ek[[u]],
\]
which implies the equalities $\deg(\cG(\SGm))=\sum_i e^{-1}s_i=e^{-1}v_u(\det\varphi)$. From the explicit description of the affine algebra of $\cH(\SGm)$ given before, we see that this is also equal to $\deg(\cH(\SGm))$.
\end{proof}

Let $\cG$ and $\cM$ be as in Lemma \ref{invdiff} (\ref{invdiff-BT}) and set $\SGm$ to be the object of $\ModSGf$ which corresponds to $\cM$ via the equivalence $\cM_\SG(-)$. We let $h$ and $d$ denote the height and the dimension of $\cG$, respectively. Let $e_1,\ldots,e_h$ be a basis of $\SGm$ and $A$ be the element of $M_h(k[[u]])$ with
\[
\varphi(e_1,\ldots,e_h)=(e_1,\ldots,e_h)A.
\]
We also put $\SGm_1=\SGm/u^e\SGm$ and $\Fil^1\SGm_1=(1\otimes\varphi)(\tokey\otimes_{\varphi,\tokey}\SGm_1)\subseteq \SGm_1$.
Then we have an isomorphism of $\tokey$-modules
\[
\cM/(\Fil^1S)\cM\to \tokey\otimes_{\varphi,\tokey}\SGm_1.
\]
From Lemma \ref{invdiff} (\ref{invdiff-BT}) and the definition of the functor $\cM_\SG(-)$, we also have natural isomorphisms of $\tokey$-modules
\[
\Lie(\cG^\vee_1)\to \cM/\Fil^1\cM \overset{1\otimes\varphi}\to \Fil^1\SGm_1.
\]
Hence the $\tokey$-module $\Fil^1\SGm_1$ is free of rank $h-d$ and each elementary divisor of the matrix $A$ is either $1$ or $u^e$. This implies that the $\tokey$-module $\SGm_1/\Fil^1\SGm_1$ is free of rank $d$.

Consider the diagram of $\tokey$-modules
\[
\xymatrix{
\tokey \otimes_{\varphi,\tokey}(\tokey \otimes_{\varphi,\tokey} \SGm_1) \ar[r]^-{1\otimes1\otimes \varphi}\ar[d]_{1\otimes \varphi \otimes \varphi} & \tokey \otimes_{\varphi,\tokey}\Fil^1\SGm_1 \ar[r]\ar[d] & \tokey \otimes_{\varphi,\tokey}\SGm_1 \ar[d]^{1\otimes \varphi}\\
\tokey \otimes_{\varphi,\tokey} \SGm_1 \ar[r]_-{1\otimes \varphi} & \Fil^1\SGm_1 \ar[r] & \SGm_1
}
\] 
whose left horizontal arrows are surjections and right horizontal arrows are natural inclusions. The left vertical arrow is the map induced by the Frobenius $F_{\cG_1}$ via the natural isomorphism $\cM\simeq \bD^{*}(\cG_1)(S_1\to\tokey)$ and the middle vertical arrow is the map induced by the left vertical arrow. Lemma \ref{invdiff} (\ref{invdiff-BT}) implies that the truncated valuation of the determinant of the latter map is equal to $\Hdg(\cG)$, since the natural isomorphism $\Hom(\cG_1,\Ga)\to \Lie(\cG^\vee_1)$ takes the action of the Frobenius $F_{\cG_1}$ on the left-hand side to the action of the Verschiebung $V_{\cG^\vee_1}$ on the right-hand side. We see that the outer square is commutative and thus the right square is also commutative. Hence we have an exact sequence
\[
0 \to \Fil^1\SGm_1 \to \SGm_1 \to \SGm_1/\Fil^1\SGm_1 \to 0
\]
of the category $\mathrm{Mod}_{/\tokey}^{1,\varphi}$ with $v_p(\det(\varphi_{\Fil^1\SGm_1}))=\Hdg(\cG)$.

Let $i\leq 1$ be a non-negative rational number. Then the zeroth projection $\prjt_0:R\to \tokbar$ induces an isomorphism $R/m_R^{\geq i}\to \okbar/m_{\Kbar}^{\geq i}$, by which we identify both sides. Moreover, we also have natural isomorphisms
\[
\Hom_{\tokey}(\Fil^1\SGm_1, R/m_R^{\geq i})\to \Hom_{\tokey}(\Lie(\cG^\vee_1), \okbar/m_{\Kbar}^{\geq i})\to \omega_{\cG_1^\vee}\otimes \okbar/m_{\Kbar}^{\geq i}.
\]
\begin{lem}\label{HTmap}
Let $\cG$ be a truncated Barsotti-Tate group of level one over $\okey$ and $\SGm$ be the object of $\ModSGf$ which corresponds to $\cG$ via the anti-equivalence $\cG(-)$. 
Then the composite
\[
\cG(\okbar)\to \cH(\SGm)(R)\to \Hom_{\tokey}(\Fil^1\SGm_1, R/m_R^{\geq i})\to \omega_{\cG_1^\vee}\otimes \okbar/m_{\Kbar}^{\geq i}
\]
coincides with the Hodge-Tate map $\HT_i$ for any $i\leq 1$.
\end{lem}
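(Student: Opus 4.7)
The plan is to unwind both maps explicitly and compare them via the Cartier pairing. For $x \in \cG(\okbar)|_{G_{K_\infty}}$ corresponding to the $\varphi$-equivariant map $f = \varepsilon_\SGm(x): \SGm \to R$, since $i \leq 1$ and $f(u^e\SGm) \subseteq \upi^e R = m_R^{\geq 1} \subseteq m_R^{\geq i}$, the map $f$ descends to $\bar{f}: \SGm_1 \to R/m_R^{\geq i}$; the composite in the lemma then sends $x$ to the element of $\omega_{\cG^\vee_1}\otimes \okbar/m_{\Kbar}^{\geq i}$ represented, under the perfect duality between $\omega_{\cG_1^\vee}$ and $\Lie(\cG_1^\vee)$ for truncated Barsotti-Tate groups of level one, by the $\tokey$-linear map $\Lie(\cG_1^\vee) \to \okbar/m_{\Kbar}^{\geq i}$ equal to the composition of $\bar f|_{\Fil^1\SGm_1}$ with the isomorphism $\Lie(\cG_1^\vee) \simeq \Fil^1\SGm_1$ constructed just before the lemma.

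To identify this with $\HT_i(x)$, I would test both elements against tangent vectors $v \in \Lie(\cG_1^\vee)$. Such a $v$ corresponds to an infinitesimal $\tokey[\epsilon]/\epsilon^2$-valued point $y_\epsilon$ of $\cG^\vee$ deforming the identity, and the pairing of $\HT_i(x) = (x^\vee)^*(dt/t)$ with $v$ equals the $\epsilon$-coefficient of the Cartier pairing $\langle x, y_\epsilon\rangle \in \mu_p(\okbar[\epsilon]/\epsilon^2)$, using $dt/t$ to identify $\Lie(\mu_p)\otimes \okbar/m_{\Kbar}^{\geq i}$ with $\okbar/m_{\Kbar}^{\geq i}$. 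Invoking the Cartier pairing compatibility diagram displayed in the excerpt and extending it to $\okey[\epsilon]/\epsilon^2$-valued points via base change of the Breuil-Kisin classification, this Cartier pairing gets identified with the Breuil-Kisin pairing $\langle f_x, g_{y_\epsilon}\rangle_\SGm$, where $g_{y_\epsilon}: \SGm^\vee \to R[\epsilon]/\epsilon^2$ is the $\varphi$-equivariant map corresponding to $y_\epsilon$. Choosing a basis of $\SGm$ and its dual basis of $\SGm^\vee$, I would then show by direct computation that the $\epsilon$-coefficient of this Breuil-Kisin pairing reduces modulo $m_R^{\geq i}$ to $\bar f(v)$, once one observes that the $\epsilon$-part of $g_{y_\epsilon}$ is precisely the image of $v$ in $\Fil^1\SGm_1$ under the relevant identifications.

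The main obstacle is rigorously justifying the infinitesimal extension of the Cartier pairing compatibility and tracking the identification $\Lie(\cG_1^\vee) \simeq \Fil^1\SGm_1$, which depends on the crystalline Dieudonn\'{e} interpretation from Lemma \ref{invdiff}, through this tangent-space analysis. As a sanity check, the case $\cG = \bZ/p\bZ$ (with $\SGm = \SG_1$ and $\varphi(1) = 1$) can be verified directly: for $x \in \bZ/p\bZ(\okbar) \simeq \bF_p$ one has $f(1) = x$ in $R$, and this matches $\HT_i(x) = x \cdot dt/t$ under the natural identification $\omega_{\mu_p}\otimes \okbar/m_{\Kbar}^{\geq i} \simeq \okbar/m_{\Kbar}^{\geq i}$, which provides a consistent base case for the normalization.
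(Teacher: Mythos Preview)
Your approach has a genuine gap at the step you yourself flag as the main obstacle, and it is not easily repairable with the tools in the paper. The isomorphism $\varepsilon_\SGm$ and the Cartier pairing compatibility diagram are stated only for $\okbar$-valued points (as $G_{K_\infty}$-modules); there is no ``base change of the Breuil-Kisin classification'' to $\okey[\epsilon]/\epsilon^2$, since this ring is not a complete discrete valuation ring and the classification does not apply to it. Even if one tries to define $g_{y_\epsilon}:\SGm^\vee\to R[\epsilon]/\epsilon^2$ directly as a $\varphi$-equivariant map, the Frobenius on $R[\epsilon]/\epsilon^2$ kills $\epsilon$, so the $\epsilon$-part $g_1:\SGm^\vee\to R$ is only constrained by $g_1\circ\varphi_{\SGm^\vee}=0$; turning this into an element of $\Fil^1\SGm_1\subset\SGm_1$ requires precisely the crystalline Dieudonn\'e identification $\Lie(\cG_1^\vee)\simeq\Fil^1\SGm_1$ that you are trying to test against, and there is no independent route to that identification on the $\SGm^\vee$ side in the paper.

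The paper's proof is quite different and avoids infinitesimal points and the Cartier pairing altogether. It works directly with the crystalline Dieudonn\'e functor $\bD^{*}$: for $x\in\cG(\okbar)$, the Hodge-Tate element $\HT_1(x)$ is identified (via \cite[Proposition~5.3.6]{BBM}) with the image of the identity under $\bD^{*}(x)^\vee$, living in the Hodge filtration piece $\Hom_S(\cM/\Fil^1\cM,\tokbar)\simeq\omega_{\cG_1^\vee}\otimes\tokbar$. One then writes down a commutative diagram relating $x\mapsto\bD^{*}(x)$ on crystalline sections to the map $\Hom_{\SG,\varphi}(\SGm,R)\to\Hom_S(\cM,R^{\PD})$, $f\mapsto(s\otimes m\mapsto s\varphi(f(m)))$, and checks that the bottom row recovers the restriction to $\Fil^1\SGm_1$. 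The functoriality used is that of $\bD^{*}$ with respect to the point $x$ itself, not with respect to tangent vectors of $\cG^\vee$; this is what makes the argument go through without any infinitesimal extension of the Breuil-Kisin comparison.
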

\begin{proof}
For a sheaf $\cE$ on the site $(\sS_1/E_1)_\CRYS$, we let $\cE_{\tokbar}$ denote the set of sections on the natural divided power thickening $\tokbar\to \tokbar$.
For a valued point $x:\Spec(\okbar)\to \cG$, we let $x^\vee:\cG^\vee\times\Spec(\okbar)\to \mu_p$ denote its dual map. Since $\cG$ is a truncated Barsotti-Tate group of level one, we have a commutative diagram
\[
\xymatrix{
\Hom(\bD^{*}(\bZ/p\bZ)_{\tokbar}, \bD^{*}(\bZ/p\bZ)_{\tokbar}) \ar[d]_{\wr} \ar[r]^-{\bD^{*}(x)^\vee} & \Hom(\bD^{*}(\cG_1)_{\tokbar}, \bD^{*}(\bZ/p\bZ)_{\tokbar}) \ar[d]_{\wr} \\
\bD^{*}(\mu_p)_{\tokbar} \ar[r]_{\bD^{*}(x^{\vee})}& \bD^{*}(\cG^\vee_1)_{\tokbar}
}
\]
where the vertical arrows are isomorphisms compatible with Hodge filtrations (\cite[Proposition 5.3.6]{BBM}). Put $\cM=\cM_{\SG}(\SGm)$. In particular, by Lemma \ref{invdiff} (\ref{invdiff-BT}) we also have a commutative diagram whose vertical arrows are isomorphisms
\[
\xymatrix{
\Hom_S(\cM/\Fil^1\cM,\tokbar) \ar@{^{(}->}[r]\ar[d]^{\wr} & \Hom_S(\cM,\tokbar)\ar[d]^{\wr}\\
\omega_{\cG^\vee_1}\otimes\tokbar \ar@{^{(}->}[r]& \bD^{*}(\cG^\vee_1)_{\tokbar}.
}
\]
Then $\HT_1(x)$ coincides with the image of the identity map on the upper left corner of the former diagram by the lower composite, which is contained in the submodule $\omega_{\cG^\vee_1}\otimes\tokbar$.

Now we consider a diagram
\[
\xymatrix{
\cG(\okbar)\ar[r]\ar[rdd] \ar@/^1pc/@{-->}[rr]&   \Hom_S(\cM,R^\PD)\ar[d]& \Hom_{\SG,\varphi}(\SGm,R) \ar[l]\ar[d]\\
 & \Hom_S(\cM,\tokbar)& \Hom_{\SG,\varphi}(\SGm, \tokbar)\ar[l]\ar[d]\\
 & \Hom_S(\cM/\Fil^1\cM,\tokbar)\ar[u] & \Hom_{\tokbar}(\Fil^1\SGm_1,\tokbar).\ar[l]_-{\sim}
}
\]
Here $R^\PD$ is the divided power envelope of the ring $R$ with respect to the ideal $m_R^{\geq 1}$. The right-pointing solid arrows are defined by $x\mapsto \bD^{*}(x)$ on the sets of sections over the divided power thickenings $R^\PD\to \tokbar$ and $\tokbar\to \tokbar$. The top left-pointed horizontal arrow maps a homomorphism $f:\SGm\to R$ to the homomorphism $\cM=S\otimes_{\varphi,\SG}\SGm \to R^\PD$ defined by $s\otimes m\mapsto s\varphi(f(m))$ and the middle left-pointed horizontal arrow is defined similarly. The bottom one is induced by the isomorphism $1\otimes \varphi: \cM/\Fil^1\cM\to \Fil^1\SGm_1$. The dotted arrow is the isomorphism $\varepsilon_\SGm:\cG(\okbar)\to \TSG(\SGm)$. From the definition, we see that the bottom square is commutative and thus the diagram is also commutative. Hence the lemma follows.
\end{proof}



\section{Level one canonical subgroup}

In this section, we prove the following level one case of Theorem \ref{main}. 

\begin{thm}\label{cansubmain}
Let the notation be as in Section \ref{intro} and suppose $p>2$. Let $\cG$ be a truncated Barsotti-Tate group of level one, height $h$ and dimension $d$ over $\okey$ with $0<d<h$ and Hodge height $w=\Hdg(\cG)$. 
\begin{enumerate}
\item\label{cansubmain-1} If $w<p/(p+1)$, then there exists a unique finite flat closed subgroup scheme $\cC$ of $\cG$ of order $p^d$ over $\okey$ such that $\cC\times \Spec(\okey/m_K^{\geq 1-w})$ coincides with the kernel of the Frobenius homomorphism of $\cG\times \Spec(\okey/m_{K}^{\geq 1-w})$. We refer the subgroup scheme $\cC$ as the canonical subgroup of $\cG$. Moreover, the subgroup scheme $\cC$ has the following properties:
\begin{enumerate}
\item\label{cansubmain-deg} $\deg(\cG/\cC)=w$.
\item\label{cansubmain-iso} Let $\cC'$ be the canonical subgroup of $\cG^\vee$. Then we have the equality of subgroup schemes $\cC'=(\cG/\cC)^\vee$, or equivalently $\cC(\okbar)=\cC'(\okbar)^\bot$, where $\bot$ means the orthogonal subgroup with respect to the Cartier pairing.
\item\label{cansubmain-ram1} $\cC=\cG_{(1-w)/(p-1)}=\cG^{pw/(p-1)+}$.
\end{enumerate}
\item\label{cansubmain-HT} If $w<(p-1)/p$, then the subgroup $\cC(\okbar)$ coincides with the kernel of the Hodge-Tate map $\HT_b: \cG(\okbar)\to \omega_{\cG^\vee}\otimes \okbar/m_{\Kbar}^{\geq b}$ for $w/(p-1) <b \leq 1-w$.
\item\label{cansubmain-ram2} If $w<1/2$, then $\cC$ coincides both with the lower ramification subgroup scheme $\cG_b$ for $w/(p-1)<b\leq (1-w)/(p-1)$ and the upper ramification subgroup scheme $\cG^{j+}$ for $p w/(p-1)\leq j<p(1-w)/(p-1)$. 
\end{enumerate}
\end{thm}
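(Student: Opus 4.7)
The plan is to work via the Breuil-Kisin classification. After replacing $K$ by its completed maximal unramified extension — harmless since every stated property is stable under unramified base change and the canonical subgroup, if it exists, descends by uniqueness — we may assume the residue field $k$ is perfect. Let $\SGm \in \ModSGf$ be the object corresponding to $\cG$ under the anti-equivalence $\cG(-)$. Subsection \ref{Hfil} gives a short exact sequence $0 \to \Fil^1\SGm_1 \to \SGm_1 \to \SGm_1/\Fil^1\SGm_1 \to 0$ in $\mathrm{Mod}_{/\tokey}^{1,\varphi}$ with $v_p(\det \varphi_{\Fil^1\SGm_1}) = w$. Choosing a basis $e_1, \ldots, e_h$ of $\SGm$ whose reductions $\bar{e}_1, \ldots, \bar{e}_{h-d}$ form a basis of $\Fil^1\SGm_1$, the matrix of $\varphi_\SGm$ takes the block form
\[
A = \begin{pmatrix} P & Q \\ u^e P' & u^e Q' \end{pmatrix}, \qquad P \in M_{h-d}(\SG_1), \quad v_u(\det P) = ew.
\]

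The core of the argument is to construct a unique $\varphi$-stable $\SG_1$-direct summand $\SGl$ of $\SGm$ of rank $h-d$ lifting $\Fil^1\SGm_1$. I would look for $\SGl$ spanned by $f_j = e_j + \sum_i X_{ij}\, e_{h-d+i}$ for some $X \in M_{d, h-d}(\SG_1)$; imposing $\varphi(\SGl) \subseteq \SGl$ translates into the matrix equation
\[
XP = u^e P' + (u^e Q' - XQ)\, X^{(p)},
\]
where $X^{(p)}$ applies $\varphi$ entrywise. Rewriting as $X = u^e P' P^{-1} + (u^e Q' - XQ)\, X^{(p)} P^{-1}$ and iterating from $X_0 = 0$, the balance between the $-ew$ valuation loss from $P^{-1}$ and the $pe(1-w)$ gain from $X^{(p)}$ shows that the iteration preserves $\{X : v_u(X_{ij}) \geq e(1-w)\}$ and is $u$-adically contracting precisely when $w < p/(p+1)$; the same estimate yields uniqueness. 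Define $\cC := \cG(\SGm/\SGl)$, a finite flat closed subgroup scheme of $\cG$ of order $p^d$. Since $X \equiv 0 \pmod{u^{e(1-w)}}$, the reduction of $\SGl$ modulo $u^{e(1-w)}$ agrees with $\Span(e_1, \ldots, e_{h-d})$, which translates under $\cG(-)$ to $\cC \times \Spec(\okey/m_K^{\geq 1-w}) = \Ker F$.

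Properties (a)-(c) of Part (1) fall out quickly. For (a): the matrix of $\varphi_\SGl$ in the basis $\{f_j\}$ is $P + QX^{(p)}$, and since $v_u(QX^{(p)}) \geq pe(1-w) > ew$, we get $v_u(\det(P + QX^{(p)})) = v_u(\det P) = ew$; Lemma \ref{invdiff}(\ref{invdiff-deg}) then gives $\deg(\cG/\cC) = w$. For (b): the annihilator $\SGl^{\perp} \subset \SGm^\vee$ under $\langle\,\cdot\,,\,\cdot\,\rangle_\SGm$ lifts $\Fil^1(\SGm^\vee)_1$ (checked from the explicit formula $\varphi_{\SGm^\vee} = (E(u)/c_0)(\tran A)^{-1}$ and the block form of $A$), so by the uniqueness in the previous step applied to $\cG^\vee$, it equals the canonical lift for $\cG^\vee$, giving $\cC' = (\cG/\cC)^\vee$. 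For the remaining parts (c), (2), (3), switch to equal characteristic via Theorem \ref{ramcorr}. Points of $\cG(\okbar)|_{G_{K_\infty}} \simeq \cH(\SGm)(R)$ are $\varphi$-equivariant maps $f: \SGm \to R$, and $f \in \cC$ iff $f(\SGl) = 0$. A direct $v_R$-valuation computation on the coordinates, using the explicit defining equations of $\cH(\SGm)$ in the basis $\{f_j, e_{h-d+i}\}$, yields the lower-ramification statement $\cC = \cG_{(1-w)/(p-1)}$; combining this with Theorem \ref{TF} applied to $\cC'$ and Lemma \ref{lowramdeg} (to bound the lower ramification of $\cG/\cC$ via $\deg(\cG/\cC) = w$) produces the upper-ramification part. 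Part (2) follows from Lemma \ref{HTmap}: $\cC(\okbar) \subseteq \Ker \HT_b$ for $b \leq 1-w$ because $\SGl$ lifts $\Fil^1\SGm_1$ modulo $u^{e(1-w)}$, and the reverse inclusion for $b > w/(p-1)$ is a valuation analysis on the $\SGl$-coordinates of points outside $\cC$. Part (3) combines (c), (2), and Theorem \ref{TF} in the stricter range $w < 1/2$.

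The main obstacle is the contraction argument of the second step: the bound $w < p/(p+1)$ is sharp there, and producing both existence and uniqueness of $\SGl$ requires careful simultaneous estimates on the iterates and their differences. A secondary delicate point is the precise valuation tracking on $\cH(\SGm)(R)$ needed in parts (c), (2), (3); this is where the switch to equal characteristic via Theorem \ref{ramcorr} is essential, as the upper-ramification identity in (c) cannot be read off directly from the Kisin-module construction and must be derived by combining the lower-ramification result with the Tate-Fargues duality of Theorem \ref{TF}.
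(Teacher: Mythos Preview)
Your strategy matches the paper's almost step for step: reduce to perfect residue field, lift $\Fil^1\SGm_1$ to a $\varphi$-stable direct summand $\SGl$ by a contraction-mapping argument with the threshold $w<p/(p+1)$, then read off (a)--(c), (\ref{cansubmain-HT}), (\ref{cansubmain-ram2}) via Theorem \ref{ramcorr}, Lemma \ref{lowramdeg}, Lemma \ref{HTmap}, and Theorem \ref{TF}. Two points deserve attention.

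First, the sentence ``the reduction of $\SGl$ modulo $u^{e(1-w)}$ agrees with $\Span(e_1,\ldots,e_{h-d})$, which translates under $\cG(-)$ to $\cC\times\Spec(\okey/m_K^{\geq 1-w})=\Ker F$'' hides a genuine step. The anti-equivalence $\cG(-)$ relates $\ModSGf$ to group schemes over $\okey$, not over $\okey/m_K^{\geq i}$; there is no direct way to read the Frobenius kernel of the reduction off the Kisin module. The paper handles this with a separate lemma (Lemma \ref{Froblem}): one passes through Theorem \ref{ramcorr} (\ref{ramcorr2}), which gives an isomorphism of pointed schemes $\cG(\SGm)\times\Spec(\okp/p\okp)\simeq\cH(\SGm)\times\Spec(k[[u^{1/p}]]/(u^e))$, and then computes the Frobenius kernel on the equal-characteristic side $\cH(\SGm/u^{ei}\SGm)$, where it is visibly $\cH(\Coker(1\otimes\varphi))$. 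You should make this explicit; it is also where the uniqueness of $\cC$ (not just of $\SGl$) comes from.

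Second, your argument for (b) is different from the paper's. You propose checking directly that $\SGl^\perp\subset\SGm^\vee$ is $\varphi$-stable and lifts $\Fil^1(\SGm^\vee)_1$, then invoking uniqueness of the lift. This works (the $\varphi$-stability follows from the exactness of duality in $\ModSGf$, and the lifting claim from the block shape of $u^e(\tran A)^{-1}$), but the paper's route is shorter and avoids unwinding the dual: since $\bar\cG=\cG\times\Spec(\okey/m_K^{\geq 1-w})$ is a truncated Barsotti-Tate group of level one, $(\Img F_{\bar\cG})^\vee=\Ker F_{\bar\cG^\vee}$, so $(\cG/\cC)^\vee$ already satisfies the Frobenius-kernel characterization of $\cC'$, and uniqueness finishes. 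Finally, a minor caution on descent: ``descends by uniqueness'' is circular until existence is known over $K$; the paper instead uses the identification $\cC=\cG^{pw/(p-1)+}$, which is defined over $K$ from the start and whose base change to $K'$ is the canonical subgroup there.
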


\begin{proof}
Let $W(\bar{k})$ be the Witt ring of an algebraic closure $\bar{k}$ of $k$. Choose a Cohen ring $C(k)$ with residue field $k$ and inclusions $C(k)\to W(\bar{k})$ and $C(k)\to \okey$. Then the $\okey$-algebra $\mathcal{O}_{K'}=\okey\otimes_{C(k)}W(\bar{k})$ is a complete discrete valuation ring with residue field $\bar{k}$ and relative ramification index one over $\okey$. Note that finite flat closed subgroup schemes $\cD$ and $\cD'$ of $\cG$ of the same order over $\okey$ coincide if and only if the map $\cD\to \cG/\cD'$ is zero and the latter can be checked after the base change to $\mathcal{O}_{K'}$. Hence if we show the theorem for $\cG\times\Spec(\mathcal{O}_{K'})$, then the upper ramification subgroup scheme $\cC=\cG^{pw/(p-1)+}$ satisfies all the properties in the theorem. Therefore, replacing $\okey$ with $\mathcal{O}_{K'}$, we may assume that $k$ is perfect. 

We adopt the notation of the previous section. Let $\SGm$ be the object of the category $\ModSGf$ corresponding to $\cG$ via the anti-equivalence $\cG(-)$. Put $\SGm_1=\SGm/u^e\SGm$ and $\Fil^1\SGm_1=(1\otimes\varphi)(\tokey\otimes_{\varphi,\tokey}\SGm_1)$. We identify the $k$-algebras $k[[u]]/(u^e)$ and $\tokey$ by $u\mapsto \pi$, as before. As we have seen in Subsection \ref{Hfil}, we have the exact sequence
\[
0\to \Fil^1\SGm_1 \to \SGm_1 \to \SGm_1/\Fil^1\SGm_1 \to 0
\]
of the category $\mathrm{Mod}_{/\tokey}^{1,\varphi}$, where the $\tokey$-module $\Fil^1\SGm_1$ ({\it resp.} $\SGm_1/\Fil^1\SGm_1$) is free of rank $h-d$ ({\it resp.} rank $d$) and the equality $v_p(\det(\varphi_{\Fil^1\SGm_1}))=w$ holds.

We choose once and for all a basis $e_1\ldots,e_h$ of $\SGm$ such that $e_1,\ldots, e_{h-d}$ is a lift of a basis of the $\tokey$-module $\Fil^1\SGm_1$ to the $k[[u]]$-submodule $(1\otimes \varphi)(\SG_1\otimes_{\varphi, \SG_1}\SGm)$ and $e_{h-d+1},\ldots,e_h$ is a lift of a basis of the $\tokey$-module $\SGm_1/\Fil^1\SGm_1$. Then the elements $e_1,\ldots,e_{h-d},u^e e_{h-d+1},\ldots,u^e e_h$ form a basis of the $k[[u]]$-module $(1\otimes \varphi)(\SG_1\otimes_{\varphi, \SG_1}\SGm)$ and we have
\[
\varphi(e_1,\ldots,e_h)=(e_1,\ldots,e_h)\begin{pmatrix}P_1&P_2\\ u^eP_3&u^eP_4 \end{pmatrix}
\]
for some matrices $P_i$ with entries in the ring $k[[u]]$, where $P_4$ is a $d\times d$-matrix. Since the elements on the right-hand side also form a basis of $(1\otimes \varphi)(\SG_1\otimes_{\varphi, \SG_1}\SGm)$, the matrix
\[
\begin{pmatrix}
P_1 & P_2\\
P_3 & P_4
\end{pmatrix}
\]
is contained in $GL_h(k[[u]])$. Moreover, since $w<1$, we also have the equality $v_u(\det(P_1))=ew$ and thus there exists a matrix $\hat{P}_1\in M_{h-d}(k[[u]])$ satisfying $P_1\hat{P}_1=u^{ew} I_{h-d}$, where $I_{h-d}$ is the identity matrix. Here we note that the number $e w$ is a non-negative integer. 

For the uniqueness assertion in (\ref{cansubmain-1}), we first show the following lemma.

\begin{lem}\label{Froblem}
Let $\cD$ be a finite flat closed subgroup scheme of $\cG$ over $\okey$ and $\SGl$ be the subobject of $\SGm$ in the category $\ModSGf$ corresponding to the quotient $\cG/\cD$ via the anti-equivalence $\cG(-)$. Then, for any $i\in \frac{1}{e}\bZ$ with $0< i \leq 1$, the following conditions are equivalent:
\begin{enumerate}
\item\label{uni-1} The subgroup scheme $\cD\times \Spec(\okey/m_K^{\geq i})$ coincides with the kernel of the Frobenius homomorphism of $\cG \times \Spec(\okey/m_K^{\geq i})$.
\item\label{uni-2} $\SGl/u^{ei}\SGl=\Fil^1\SGm_1/u^{ei}\Fil^1\SGm_1$. 
\end{enumerate}
\end{lem}
\begin{proof}
Put $\SGn=\SGm/\SGl$, which is the object of the category $\ModSGf$ corresponding to the finite flat group scheme $\cD$. Note that the Frobenius map is compatible with any morphism between schemes of characteristic $p$, and that the kernel of the Frobenius of the truncated Barsotti-Tate group $\cG \times \Spec(\okey/m_K^{\geq i})$ is a finite flat closed subgroup scheme of order $p^d$. By Theorem \ref{ramcorr} (\ref{ramcorr2}), we see that the kernel of the Frobenius of the group scheme $\cH(\SGm) \times \Spec(k[[u]]/(u^{ei}))$ is also finite flat of rank $p^d$. Now the condition (\ref{uni-1}) in the lemma is equivalent to saying that $\cD\times \Spec(\okey/m_K^{\geq i})$ is killed by the Frobenius and of order $p^d$. By Theorem \ref{ramcorr} (\ref{ramcorr2}), this holds if and only if $\cH(\SGn) \times \Spec(k[[u]]/(u^{ei}))$ is killed by the Frobenius and of order $p^d$, namely if the latter subgroup scheme coincides with the kernel of the Frobenius of $\cH(\SGm) \times \Spec(k[[u]]/(u^{ei}))$.

From the definition of the anti-equivalence $\cH(-)$, we see that the Frobenius of $\cH(\SGm/u^{ei}\SGm)$ corresponds to the natural map 
\[
1\otimes \varphi: k[[u]]/(u^{e i})\otimes_{\varphi,k[[u]]}\SGm/u^{e i}\SGm \to \SGm/u^{e i}\SGm.
\]
Since $\Coker(1\otimes \varphi)=\SGm_1/\Fil^1\SGm_1\otimes k[[u]]/(u^{e i})$ is free of finite rank over $k[[u]]/(u^{e i})$, the kernel of the Frobenius coincides with $\cH(\Coker(1\otimes \varphi))$. Thus the subgroup scheme $\cH(\SGn/u^{e i}\SGn)$ coincides with the kernel of the Frobenius of $\cH(\SGm/u^{e i}\SGm)$ if and only if $\SGl/u^{e i}\SGl=\Img(1\otimes \varphi)=\Fil^1\SGm_1/u^{e i}\Fil^1\SGm_1$.
\end{proof}

By this lemma, the unique existence of $\cC$ as in Theorem \ref{cansubmain} (\ref{cansubmain-1}) follows from the lemma below, by putting $\SGn=\SGm/\SGl$ and $\cC=\cG(\SGn)$.

\begin{lem}\label{uniquelift}
There exists a unique direct summand $\SGl$ of the $k[[u]]$-module $\SGm$ which is free of rank $h-d$ such that $\SGl$ is stable under $\varphi=\varphi_\SGm$ and $\SGl$ satisfies the condition (\ref{uni-2}) of Lemma \ref{Froblem} for $i=1-w$. Moreover, the $\varphi$-module $\SGl$ defines a subobject of $\SGm$ in the category $\ModSGf$ with $v_u(\det(\varphi_{\SGl}))=ew$.
\end{lem}
\begin{proof}
Let $\SGl$ be a direct summand of $\SGm$ satisfying the condition (\ref{uni-2}) of Lemma \ref{Froblem} for $i=1-w$. Let $\delta_1,\ldots, \delta_{h-d}$ be a basis of $\SGl$. Then we have 
\[
(\delta_1,\ldots,\delta_{h-d})=(e_1,\ldots,e_h)\begin{pmatrix}I_{h-d}+u^{e(1-w)}B'\\ u^{e(1-w)}B \end{pmatrix}
\]
with some $B\in M_{d, h-d}(k[[u]])$ and $B'\in M_{h-d}(k[[u]])$. By multiplying the inverse of the invertible matrix $I_{h-d}+u^{e(1-w)}B'$, we may assume $B'=0$. It is enough to show that there exists $B$ uniquely such that the resulting $\SGl$ is stable under $\varphi=\varphi_{\SGm}$, and that $\SGl$ defines an element of the category $\ModSGf$ satisfying $v_u(\det\varphi_\SGl)=ew$.

Note that we have
\[
\varphi(\delta_1,\ldots,\delta_{h-d})=(e_1,\ldots,e_h)\begin{pmatrix}P_1&P_2\\ u^eP_3&u^eP_4\end{pmatrix}\begin{pmatrix}I_{h-d}\\ u^{ep(1-w)}\varphi(B) \end{pmatrix}.
\]
Consider the equation
\[
\varphi(\delta_1,\ldots,\delta_{h-d})=(\delta_1,\ldots,\delta_{h-d})D
\]
for $D\in M_{h-d}(k[[u]])$. This is equivalent to the following equations:
\[
\left\{
\begin{array}{l}
P_1+u^{ep(1-w)}P_2\varphi(B)=D,\\
u^eP_3+u^{e+ep(1-w)}P_4\varphi(B)=u^{e(1-w)}BD.
\end{array}
\right.
\]
From this we obtain the equation for $B$
\[
BP_1=u^{ew}P_3-u^{ep(1-w)}BP_2\varphi(B)+u^{ew+ep(1-w)}P_4\varphi(B).
\]
By multiplying $\hat{P}_1$, we have
\[
B=P_3\hat{P}_1-u^{ep(1-w)-ew}BP_2\varphi(B)\hat{P}_1+u^{ep(1-w)}P_4\varphi(B)\hat{P}_1.
\]
The assumption $w<p/(p+1)$ implies the inequalities $ep(1-w)-ew>0$ and $ep(1-w)>0$. Therefore we can solve this equation uniquely by recursion to obtain $B$ and $D$ satisfying the above equations. Moreover, we also have $D=P_1(I_{h-d}+u^{ep(1-w)-ew}\hat{P}_1P_2\varphi(B))$ and the matrix $I_{h-d}+u^{ep(1-w)-ew}\hat{P}_1P_2\varphi(B)$ is invertible. Hence we see that the module $\SGl$ defines an object of the category $\ModSGf$ and $v_u(\det(\varphi_\SGl))=ew$. This concludes the proof of the lemma.
\end{proof}

By Lemma \ref{invdiff} (\ref{invdiff-deg}), we obtain the equalities $\deg(\cG/\cG(\SGn))=\deg(\cG(\SGl))=e^{-1}v_u(\det(\varphi_\SGl))=w$ and the part (\ref{cansubmain-deg}) of Theorem \ref{cansubmain} follows.

For the part (\ref{cansubmain-iso}), put $\bar{\cG}=\cG\times\Spec(\okey/m_K^{\geq 1-w})$. Since $\bar{\cG}$ is a truncated Barsotti-Tate group of level one, we have the equalities of finite flat closed subgroup schemes 
\[
(\Img(F_{\bar{\cG}}))^\vee=\Img(V_{\bar{\cG}^\vee})=\Ker(F_{\bar{\cG}^\vee}).
\]
Thus the subgroup scheme $(\cG/\cC)^\vee \times \Spec(\okey/m_K^{\geq 1-w})$ coincides with the kernel of the Frobenius of $\bar{\cG}^\vee$. By the uniqueness of the canonical subgroup we have just proved, we obtain the equality $\cC'=(\cG/\cC)^\vee$.

Let us prove the part (\ref{cansubmain-ram1}). By Theorem \ref{TF} and the part (\ref{cansubmain-iso}), it suffices to show the equality $\cC=\cG_{(1-w)/(p-1)}$. By Theorem \ref{ramcorr} (\ref{ramcorr1}), we are reduced to show the equality $\cH(\SGn)=\cH(\SGm)_{(1-w)/(p-1)}$. For this, we identify an element of the group $\cH(\SGm)(R)=\Hom_{\SG_1,\varphi}(\SGm,R)$ defined by $e_i\mapsto x_i$ for $1\leq i\leq h-d$ and $e_{h-d+i}\mapsto y_i$ for $1\leq i \leq d$ with a solution $(\underline{x},\underline{y})=(x_1,\ldots,x_{h-d},y_1,\ldots,y_d)$ in $R^h$ of the equation
\[
(\underline{x}^p,\underline{y}^p)=(\underline{x},\underline{y})\begin{pmatrix}P_1&P_2\\ u^eP_3&u^eP_4\end{pmatrix},
\]
where we put $\underline{x}^p=(x_1^p,\ldots,x_{h-d}^p)$ and similarly for $\underline{y}^p$. We use the notation in the proof of Lemma \ref{uniquelift}. Then we can also identify the group $\cH(\SGl)(R)$ with the set of solutions $\underline{z}=(z_1,\ldots,z_{h-d})$ in $R^{h-d}$ of the equation
\[
\underline{z}^p=\underline{z}D
\]
and the natural map $\cH(\SGm)\to \cH(\SGl)$ is identified with the map $(\underline{x},\underline{y})\mapsto \underline{z}=\underline{x}+u^{e(1-w)}\underline{y}B$.

Let $(\underline{x}, \underline{y})$ be an element of $\cH(\SGn)(R)=\Ker(\cH(\SGm)(R)\to \cH(\SGl)(R))$. Then we have $\underline{x}+u^{e(1-w)}\underline{y}B=0$ and thus, by eliminating $\underline{x}$ from the equation for $(\underline{x},\underline{y})$, we obtain
\[
\underline{y}^p=u^{e(1-w)}\underline{y}(u^{ew}P_4-BP_2).
\]
Hence the inequality 
\[
p\min_{1\leq i \leq d}v_R(y_i) \geq 1-w + \min_{1\leq i \leq d}v_R(y_i)
\]
follows, which implies $v_R(y_i)\geq (1-w)/(p-1)$ and thus $v_R(x_i)\geq p(1-w)/(p-1)$ for any $i$. This shows the inclusion $\cH(\SGn)(R) \subseteq \cH(\SGm)_{(1-w)/(p-1)}(R)$.

Conversely, let $(\underline{x}, \underline{y})$ be an element of $\cH(\SGm)_{(1-w)/(p-1)}(R)$. Then we can write $(\underline{x}, \underline{y})=u^{e(1-w)/(p-1)}(\underline{a}, \underline{b})$ with some tuples $\underline{a}, \underline{b}$ in $R$ and a $(p-1)$-st root $u^{e(1-w)/(p-1)}$ of $u^{e(1-w)}$ in $R$. These tuples satisfy the equality
\[
u^{e(1-w)}(\underline{a}^p, \underline{b}^p)=(\underline{a}, \underline{b})\begin{pmatrix}P_1&P_2\\ u^eP_3&u^eP_4\end{pmatrix}.
\]
Let $\begin{pmatrix}Q_1&Q_2\\ Q_3&Q_4\end{pmatrix}$ be the inverse of the matrix $\begin{pmatrix}P_1&P_2\\ P_3&P_4\end{pmatrix}$. By multiplying this, we obtain
\[
\underline{a}=u^{e(1-w)}(\underline{a}^pQ_1+\underline{b}^pQ_3)
\]
and thus $v_R(x_i)\geq p(1-w)/(p-1)$ for any $i$. Let $\underline{z}$ be the image of the element $(\underline{x}, \underline{y})$ by the natural map $\cH(\SGm)(R)\to \cH(\SGl)(R)$. Then we have $v_R(z_i)\geq p(1-w)/(p-1) >w/(p-1)$. By Lemma \ref{invdiff} (\ref{invdiff-deg}), the equality $\deg(\cH(\SGl))=w$ holds and Lemma \ref{lowramdeg} implies $\underline{z}=0$. Hence the inclusion $\cH(\SGm)_{(1-w)/(p-1)}(R) \subseteq \cH(\SGn)(R)$ follows and we conclude the proof of the part (\ref{cansubmain-ram1}).

For the assertion (\ref{cansubmain-HT}) of Theorem \ref{cansubmain}, let us assume $w<(p-1)/p$. Let $b$ be a rational number with $w/(p-1)< b \leq 1-w$. By the definition of the submodule $\SGl$, we have a commutative diagram
\[
\xymatrix{
\cH(\SGm)(R) \ar[r]\ar[d]& \cH(\SGl)(R)\ar[d]  \\
\cH(\SGm/u^{e(1-w)}\SGm)(R/m_R^{\geq b}) \ar[r]\ar[rd]& \cH(\SGl/u^{e(1-w)}\SGl)(R/m_R^{\geq b})\ar@{=}[d]\\
& \cH(\Fil^1\SGm_1/u^{e(1-w)}\Fil^1\SGm_1)(R/m_R^{\geq b})
}
\]
whose upper right vertical arrow is an injection by Lemma \ref{lowramdeg}. Therefore the assertion (\ref{cansubmain-HT}) follows from Lemma \ref{HTmap}.

Finally, let us prove the assertion (\ref{cansubmain-ram2}). Assume $w<1/2$. Take $i>w/(p-1)$ and consider the commutative diagram with exact upper row
\[
\xymatrix{
0 \ar[r] & \cC(\okbar) \ar[r] & \cG(\okbar)\ar[r]\ar[d] & (\cG/\cC)(\okbar) \ar[r]\ar[d] & 0 \\
         &            &    \cG(\okbar/m_{\Kbar}^{\geq i}) \ar[r] & (\cG/\cC)(\okbar/m_{\Kbar}^{\geq i}).
}
\]
By Lemma \ref{lowramdeg}, we have $(\cG/\cC)_i=0$ and the right vertical arrow in the diagram is an injection. This implies the inclusion $\cG_{w/(p-1)+}\subseteq \cC$. Thus we obtain the inclusions
\[
\cC=\cG_{(1-w)/(p-1)} \subseteq \cG_{w/(p-1)+}\subseteq \cC
\]
and the equality $\cC=\cG_b$ holds for any $b$ satisfying $w/(p-1)<b \leq (1-w)/(p-1)$. The assertion for upper ramification subgroups follows from the part (\ref{cansubmain-iso}) and Theorem \ref{TF}.
\end{proof}



\section{Higher canonical subgroups}

In this section, we prove Theorem \ref{main} and Corollary \ref{cansubfamily}. Though this can be done basically by repeating arguments in \cite{AM}, \cite{Fa} and \cite{Ti_HN}, we give a proof here with necessary modifications for the convenience of the reader. First we recall the following two lemmas in \cite{Fa} whose proofs depend only on elementary arguments and are independent of the theory of Harder-Narasimhan filtrations or Hodge-Tate maps developed there. Note that for Lemma \ref{Hasse}, the proof of \cite[Th\'{e}or\`{e}me 5]{Fa} remains valid also for our subgroup scheme $\cC$ by Theorem \ref{cansubmain} (\ref{cansubmain-1}).

\begin{lem}[\cite{Fa}, Th\'{e}or\`{e}me 5]\label{Hasse}
Let $\cG$ be a truncated Barsotti-Tate group of level two, height $h$ and dimension $d<h$ over $\okey$ with $\Hdg(\cG)<1/(p+1)$. Let $\cC$ be the level one canonical subgroup of $\cG[p]$ constructed in Theorem \ref{cansubmain}. Then the group scheme $p^{-1}\cC/\cC$ is a truncated Barsotti-Tate group of level one, height $h$ and dimension $d$ with $\Hdg(p^{-1}\cC/\cC)=p\Hdg(\cG)$.
\end{lem}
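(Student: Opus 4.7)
The plan is to identify $p^{-1}\cC/\cC$ concretely as $(\cG/\cC)[p]$, which is valid since $\cG$ is $p^2$-torsion (any class in $\cG/\cC$ killed by $p$ lifts to an element of $p^{-1}\cC$, and conversely). From the short exact sequence
\[
0 \to \cG[p]/\cC \to p^{-1}\cC/\cC \xrightarrow{p} \cC \to 0
\]
and additivity of degree, combined with Theorem \ref{cansubmain} (\ref{cansubmain-deg}) giving $\deg(\cG[p]/\cC) = w$ and the formula $\deg(\cG[p]) = d$ (which follows from $\omega_{\cG[p]} \cong (\okey/p\okey)^d$), one immediately computes $\deg(p^{-1}\cC/\cC) = d$ and $|p^{-1}\cC/\cC| = p^h$.

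Next, to establish the truncated Barsotti--Tate structure of level one with height $h$ and dimension $d$, I would exploit a Frobenius argument modulo $m_K^{\geq 1-w}$. Setting $\bar{\cG} := \cG \times \Spec(\okey/m_K^{\geq 1-w})$, the identity $VF = p$ on $\bar{\cG}$ forces $\Ker F_{\bar{\cG}} \subseteq \bar{\cG}[p]$, so Theorem \ref{cansubmain} (\ref{cansubmain-1}) applied to $\cG[p]$ gives the stronger identification $\bar{\cC} = \Ker F_{\bar{\cG}}$. Consequently $\bar{\cG}/\bar{\cC}$ identifies via Frobenius with $F(\bar{\cG}) \subseteq \bar{\cG}^{(p)}$, and taking $p$-torsion identifies $(p^{-1}\cC/\cC) \times \Spec(\okey/m_K^{\geq 1-w})$ with the whole of $\bar{\cG}^{(p)}[p]$, by comparing orders. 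The target is truncated Barsotti--Tate of level one with height $h$ and dimension $d$, and a standard flatness/deformation argument (using annihilation by $p$, the order $p^h$, and the degree $d$) propagates this structure back to $\okey$.

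The crux of the lemma is the equality $\Hdg(p^{-1}\cC/\cC) = pw$. My plan is to extract this via the Kisin-module machinery, extending Section \ref{ReviewBK} to the level-two setting (as in \cite{Ki_BT}): let $\tilde{\SGm}$ be the Kisin module of $\cG$, and choose a basis adapted to the chain $0 \subset \cC \subset p^{-1}\cC \subset \cG$ paralleling the one from the proof of Theorem \ref{cansubmain}. The Kisin module of $(\cG/\cC)[p] = p^{-1}\cC/\cC$ is then the mod-$p$ reduction of the sub-Kisin-module $\tilde{\SGl} \subseteq \tilde{\SGm}$ corresponding to $\cG/\cC$, and its Frobenius matrix acquires an additional factor $\varphi(P_1)$ from the Frobenius twist inherent in passing from $\cC$ (of Hodge height $w$) to its successor $p^{-1}\cC/\cC$. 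Since $v_u(\det P_1) = ew$ and $v_u(\det \varphi(P_1)) = epw$, one reads off $\Hdg(p^{-1}\cC/\cC) = pw$ after dividing out the contribution from the sub-object.

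The main obstacle is that Section \ref{ReviewBK} develops Breuil--Kisin only in the $p$-torsion case, so this step requires either invoking the level-two extension or following \cite{Fa}, Th\'eor\`eme 5, using Dieudonn\'e theory directly on $\bar{\cG}$: there the identification $\bar{\cG}/\bar{\cC} \cong F(\bar{\cG})$ exhibits $\det V$ on $\Lie((p^{-1}\cC/\cC)^\vee)$ as a Frobenius twist of $\det V$ on the corresponding Lie algebra of $\cC^\vee$, making the $p$-fold scaling to valuation $pw$ transparent.
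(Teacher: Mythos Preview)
The paper does not give its own proof of this lemma: it simply cites \cite[Th\'eor\`eme 5]{Fa} and remarks that the argument there goes through verbatim once one knows that $\cC$ reduces to the Frobenius kernel modulo $m_K^{\geq 1-w}$, which is exactly Theorem~\ref{cansubmain}~(\ref{cansubmain-1}). So the comparison is between your outline and Fargues' argument.

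Your second paragraph and the final sentence of your last paragraph are precisely Fargues' proof. The identification $\bar\cC=\Ker F_{\bar\cG}$ gives the closed immersion $F:\bar\cG/\bar\cC\hookrightarrow\bar\cG^{(p)}$, and restricting to $p$-torsion yields an isomorphism $(p^{-1}\cC/\cC)\times\Spec(\okey/m_K^{\geq 1-w})\simeq\bar\cG[p]^{(p)}$ by comparing orders. Since being truncated Barsotti--Tate of level one is a condition on the special fibre, reducing this isomorphism further to $k$ settles the $\mathrm{BT}_1$ structure, height and dimension; your ``standard flatness/deformation argument'' is just this observation. For the Hodge height, the same isomorphism over $\okey/m_K^{\geq 1-w}$ forces the determinants of Verschiebung on the two Lie algebras to agree modulo $m_K^{\geq 1-w}$; since the Frobenius twist has Hodge height $pw$ and the hypothesis $w<1/(p+1)$ is exactly the inequality $pw<1-w$, this pins down $\Hdg(p^{-1}\cC/\cC)=pw$ on the nose. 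You allude to this but do not isolate the role of the hypothesis; it is the whole point of the bound $1/(p+1)$.

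Your primary proposal for the Hodge height, via level-two Kisin modules and an explicit matrix computation, is a genuine detour. It can be made to work, but as you note yourself the paper only sets up the $p$-torsion theory, and in any case the Frobenius-twist argument above is both shorter and what Fargues actually does. I would drop the Kisin-module paragraph and promote the Dieudonn\'e/Frobenius argument to the main line.
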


\begin{lem}[\cite{Fa}, Proposition 12]\label{freeness}
Let $\cG$ be a truncated Barsotti-Tate group of level $n$ and dimension $d$ over $\okey$. Let $\cC$ be a finite flat closed subgroup scheme of $\cG$ over $\okey$ of order $p^{nd}$. Suppose that we have the inequality $\deg(\cC)>nd-1$. Then the group $\cC(\okbar)$ is isomorphic to $(\bZ/p^n\bZ)^d$.
\end{lem}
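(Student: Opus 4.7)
The plan is to translate the degree hypothesis $\deg(\cC) > nd - 1$ into combinatorial information about the abstract group $\cC(\okbar)$ via the multiplication-by-$p$ filtration of $\cC$. Since the generic fiber $\cC_K$ is \'etale of order $p^{nd}$ and killed by $p^n$, as an abstract group we have $\cC(\okbar) \simeq \bigoplus_{i=1}^{r} \bZ/p^{n_i}\bZ$ with $1 \leq n_i \leq n$ and $\sum_i n_i = nd$. Setting $r_j = \#\{i : n_i > j\}$ gives $r_0 = r$, $r_0 \geq r_1 \geq \cdots \geq r_{n-1}$ and $\sum_{j=0}^{n-1} r_j = nd$. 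Since $r_0 \geq d$ is automatic (as $r_0 \cdot n \geq \sum n_i = nd$) and equality forces each $n_i = n$, the desired conclusion reduces to showing $r_0 = d$.

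First I would introduce $p^j\cC$, the scheme-theoretic image of multiplication by $p^j$ on $\cC$, and use the two natural short exact sequences of finite flat group schemes over $\okey$
\begin{gather*}
0 \to p^{j+1}\cC \to p^j\cC \to p^j\cC/p^{j+1}\cC \to 0, \\
0 \to (p^j\cC)[p] \to p^j\cC \xrightarrow{p} p^{j+1}\cC \to 0.
\end{gather*}
Each graded piece $p^j\cC/p^{j+1}\cC$ is killed by $p$ of order $p^{r_j}$. By additivity of $\deg$ on the two sequences, one deduces $\deg(\cC) = \sum_j \deg(p^j\cC/p^{j+1}\cC)$ together with the crucial identity $\deg(p^j\cC/p^{j+1}\cC) = \deg((p^j\cC)[p])$.

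The core estimate is $\deg((p^j\cC)[p]) \leq d$: since $(p^j\cC)[p]$ is killed by $p$ and contained in $\cG$, it is a closed subgroup scheme of $\cG[p]$; because $\cG[p]$ is a truncated Barsotti-Tate group of level one and dimension $d$, the module $\omega_{\cG[p]}$ is free of rank $d$ over $\tokey$, so the surjection $\omega_{\cG[p]} \twoheadrightarrow \omega_{(p^j\cC)[p]}$ forces the target to decompose into at most $d$ cyclic $\tokey$-modules, each of length at most one. If $r_0 \geq d+1$, combining the resulting $j=0$ bound $\deg(\cC/p\cC) \leq d$ with the trivial bound $\deg(p^j\cC/p^{j+1}\cC) \leq r_j$ for $j \geq 1$ yields
\[
\deg(\cC) \leq d + \sum_{j=1}^{n-1} r_j = d + (nd - r_0) \leq nd - 1,
\]
contradicting the hypothesis. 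Hence $r_0 = d$ and $\cC(\okbar) \simeq (\bZ/p^n\bZ)^d$.

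The main technical point is verifying that the two displayed sequences are genuine exact sequences of finite flat group schemes over $\okey$; this requires identifying $p^{j+1}\cC$ with the scheme-theoretic image and $(p^j\cC)[p]$ with the scheme-theoretic kernel of the relevant maps, both of which follow from flatness of $\cC$ over $\okey$ and the \'etaleness of the generic fiber. Once this setup is in place the remainder of the argument is a clean bookkeeping of degrees.
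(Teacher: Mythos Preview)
The paper does not give its own proof of this lemma; it simply cites \cite[Proposition 12]{Fa}. Your argument is essentially Fargues' strategy and is correct in outline: filter $\cC$, bound the bottom graded piece by $d$ using the embedding into $\cG[p]$, bound the remaining pieces by their heights $r_j$, and conclude $\deg(\cC)\leq d+(nd-r_0)$.

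The one point that needs care is your second exact sequence. The map $p:p^j\cC\to p^{j+1}\cC$ need not be faithfully flat: the scheme-theoretic kernel $(p^j\cC)[p]$ is not obviously flat over $\okey$, and your justification (``flatness of $\cC$ and \'etaleness of the generic fiber'') only guarantees that \emph{scheme-theoretic closures} of generic subgroups are flat, not that scheme-theoretic kernels are. So the claimed identity $\deg(p^j\cC/p^{j+1}\cC)=\deg((p^j\cC)[p])$ is not established. Two easy fixes:
\begin{itemize}
\item You only need the inequality $\deg(\cC/p\cC)\leq\deg(\cC^{[1]})$, where $\cC^{[1]}$ is the flat closure of $\cC(\okbar)[p]$. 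Multiplication by $p$ induces a morphism $\cC/\cC^{[1]}\to p\cC$ which is an isomorphism on generic fibers; the standard inequality $\deg(G)\leq\deg(H)$ for a generic isomorphism $G\to H$ of finite flat group schemes then gives $\deg(\cC/\cC^{[1]})\leq\deg(p\cC)$, hence $\deg(\cC/p\cC)\leq\deg(\cC^{[1]})\leq d$.
\item Cleaner still: filter by the flat closures $\cC^{[i]}$ of $\cC(\okbar)[p^i]$ instead of by $p^j\cC$. Then only the first (trivially exact) sequence is needed, the bottom graded piece is $\cC^{[1]}$ itself, and the bound $\deg(\cC^{[1]})\leq d$ is immediate from the closed immersion $\cC^{[1]}\hookrightarrow\cG[p]$.
\end{itemize}
With either adjustment your proof goes through.
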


\begin{proof}[Proof of Theorem \ref{main}]
We proceed by induction on $n$. The case of $n=1$ is Theorem \ref{cansubmain}. For $n\geq 2$, suppose that the theorem holds for any level less than $n$. By assumption, we have the level one canonical subgroup $\cC$ of $\cG[p]$ as in Theorem \ref{cansubmain}. Then Lemma \ref{Hasse} implies that, for $n\geq 2$, the group scheme $p^{-(n-1)}\cC/\cC$ is a truncated Barsotti-Tate group of level $n-1$ over $\okey$ with $\Hdg(p^{-(n-1)}\cC/\cC)=pw$. Using the induction hypothesis for $p^{-(n-1)}\cC/\cC$, we define the level $n$ canonical subgroup $\cC_n$ of $\cG$ to be the unique finite flat closed subgroup scheme of $\cG$  over $\okey$ containing $\cC$ such that the quotient $\cC_n/\cC$ is the level $n-1$ canonical subgroup of $p^{-(n-1)}\cC/\cC$. Then $\cC_n$ is of order $p^{nd}$.

By the assertion on the Frobenius kernel for $p^{-(n-1)}\cC/\cC$, we have the equality $\cC_n=(F^{n-1})^{-1}(\cC^{(p^{n-1})})$ of fppf sheaves over $\Spec(\okey/m_K^{\geq 1-p^{n-1}w})$. Since the subgroup scheme $\cC\times \Spec(\okey/m_K^{\geq 1-p^{n-1}w})$ also coincides with the Frobenius kernel of $\cG\times \Spec(\okey/m_K^{\geq 1-p^{n-1}w})$, we obtain the assertion on the Frobenius kernel for $\cG$.

Since the multiplication by $p^{n-1}$ induces an isomorphism $\cG/p^{-(n-1)}\cC\to \cG[p]/\cC$, we have the equality 
\[
\deg(\cG/\cC_n)=\deg(\cG[p]/\cC)+\deg((p^{-(n-1)}\cC/\cC)/(\cC_n/\cC))
\]
and the part (\ref{main-deg}) of the theorem follows from the induction hypothesis. 

For the part (\ref{main-iso}), it is enough to show the Cartier pairing $\langle\ ,\ \rangle_\cG$ kills the subset $\cC_n(\okbar)\times \cC_n'(\okbar)$. Let $\cC'$ be the canonical subgroup of $\cG^\vee[p]$. By the construction of $\cC_n'$, we have $\cC'\subseteq \cC_n'$ and $\cC_n'/\cC'\subseteq p^{-(n-1)}\cC'/\cC'$. For $x\in \cC(\okbar)$ and $y\in \cC'_n(\okbar)$, we have $p^{n-1}y \in \cC'(\okbar)$ and $\langle x, y\rangle_\cG=\langle x,p^{n-1}y\rangle_{\cG[p]}=0$ by Theorem \ref{cansubmain} (\ref{cansubmain-iso}). Thus the subgroup $\cC_n'(\okbar)$ is contained in $\cC(\okbar)^\bot=(\cG/\cC)^\vee(\okbar)\subseteq \cG^\vee(\okbar)$ and we have $\langle x, y\rangle_\cG=\langle \bar{x},y\rangle_{\cG/\cC}$ for any $x\in \cC_n(\okbar)$ and $y\in \cC'_n(\okbar)$, where $\bar{x}$ is the image of $x$ in $(\cG/\cC)(\okbar)$. Since we have an exact sequence
\[
0 \to p^{-(n-1)}\cC/\cC\to \cG/\cC\overset{p^{n-1}}{\to} \cG[p]/\cC\to 0
\]
and an isomorphism $\cC'\simeq (\cG[p]/\cC)^\vee$ of subgroup schemes of $\cG^\vee[p]$, a similar argument on the side of $\cC'_n$ implies the equality $\langle \bar{x},y\rangle_{\cG/\cC}=\langle \bar{x},\bar{y}\rangle_{p^{-(n-1)}\cC/\cC}$, where $\bar{y}$ is the image of $y$ by the surjection $(\cG/\cC)^\vee(\okbar) \to (p^{-(n-1)}\cC/\cC)^\vee(\okbar)$. Moreover, we have an isomorphism $(\cG/\cC)^\vee\simeq p^{-(n-1)}\cC'$ of subgroup schemes of $\cG^\vee$ fitting into the commutative diagram
\[
\xymatrix{
0 \ar[r] & (\cG[p]/\cC)^\vee \ar[d]_{\wr} \ar[r] &(\cG/\cC)^\vee \ar[r]\ar[d]_{\wr} &(p^{-(n-1)}\cC/\cC)^\vee \ar[r]\ar@{.>}[d] &0 \\
0 \ar[r] & \cC' \ar[r] & p^{-(n-1)}\cC' \ar[r] & p^{-(n-1)}\cC'/\cC' \ar[r] &0.
}
\]
Thus we obtain an isomorphism $(p^{-(n-1)}\cC/\cC)^\vee\to p^{-(n-1)}\cC'/\cC'$ and the equality $\langle \bar{x},\bar{y}\rangle_{p^{-(n-1)}\cC/\cC}=0$ holds by induction hypothesis.

Moreover, for $w<(p-1)/(p^n-1)$, the inequality $\deg(\cC_n)=nd-w(p^n-1)/(p-1)>nd-1$ holds by the part (\ref{main-deg}) and Lemma \ref{freeness} implies the part (\ref{main-free}).

Next we show the part (\ref{main-sub}). By induction hypothesis, we have the subgroup scheme $\cC_i$ of $\cG[p^i]$ as in the theorem. Note that, by the part (\ref{main-free}), a subgroup of $\cC_n(\okbar)$ which is isomorphic to $(\bZ/p^{n-1}\bZ)^d$ is equal to $\cC_n(\okbar)[p^{n-1}]$. Hence, by induction hypothesis, it is enough to check that $\cC_{n-1}$ is contained in $\cC_n$. The case of $n=2$ follows from the definition of $\cC_n$. Suppose $n>2$. For any $i$ with $2\leq i \leq n$, the group scheme $\cC_{i}/\cC$ is the level $i-1$ canonical subgroup of the truncated Barsotti-Tate group $p^{-(i-1)}\cC/\cC$ of Hodge height $pw$. The inequality $p(p-1)/(p^n-1)<(p-1)/(p^{n-1}-1)$ and the induction hypothesis imply that $\cC_{n-1}/\cC$ is contained in $\cC_n/\cC$. Thus the part (\ref{main-sub}) follows.

Let us show the assertion (\ref{main-HT}). Assume $w<(p-1)/p^n$ and set $\cK_n$ to be the scheme-theoretic closure in $\cG$ of the subgroup $\Ker(\HT_{n-w(p^n-1)/(p-1)})$. We have the inequality $w/(p-1)<1-\epsilon \leq 1-w$ for $\epsilon=w(p^n-1)/(p-1)$ and Theorem \ref{cansubmain} (\ref{cansubmain-HT}) implies that the kernel of the map $\HT_{1-\epsilon}: \cG[p](\okbar) \to \omega_{\cG[p]^\vee}\otimes\okbar/m_{\Kbar}^{\geq 1-\epsilon}$ is of order $p^d$. Using this, we can show that the order of the group scheme $\cK_n$ is no more than $p^{nd}$, by an elementary argument just as in \cite[Proposition 13]{Fa}. On the other hand, we have a commutative diagram with exact rows
\[
\xymatrix{
0 \ar[r] & \cC(\okbar)\ar[d]_{\HT} \ar[r] &\cG(\okbar) \ar[r]\ar[d]_{\HT} &(\cG/\cC)(\okbar) \ar[r]\ar[d]_{\HT} &0 \\
0 \ar[r] &\omega_{\cC^\vee}\otimes \okbar \ar[r] &\omega_{\cG^\vee}\otimes \okbar \ar[r] &\omega_{(\cG/\cC)^\vee}\otimes \okbar \ar[r] &0,
}
\]
where we put $\HT(x)=(x^\vee)^{*}(dt/t)$ as before. Take $x\in \cC_n(\okbar)$ and let $\bar{x}$ denote its image in $(\cG/\cC)(\okbar)$. By the construction of $\cC_n$ and induction hypothesis, we see that the element $\HT(\bar{x})$ is killed by the ideal $m_{\Kbar}^{\geq w(p^n-p)/(p-1)}$. Since $\deg(\cC^\vee)=w$, we have $m_{\Kbar}^{\geq w}(\omega_{\cC^\vee}\otimes \okbar)=0$. Hence the element $\HT(x)$ is killed by the ideal $m_{\Kbar}^{\geq w(p^n-1)/(p-1)}$ and we obtain the inclusion $\cC_n\subseteq \cK_n$, which implies the assertion (\ref{main-HT}) by comparing orders.

Finally, the assertion (\ref{main-ram}) follows just as in the proof of \cite[Theorem 2.5]{Ti_HN}, using induction hypothesis and assertions of the theorem we have already proved. This concludes the proof of Theorem \ref{main}.
\end{proof}

We can also prove the following result on anti-canonical isogenies, slightly generalizing \cite[Proposition 16]{Fa}. 
\begin{prop}\label{noncanisog}
Let $\cG$ be a truncated Barsotti-Tate group over $\okey$ of level two, height $h$, dimension $d$ with $0<d<h$ and Hodge height $w=\Hdg(\cG)$. Suppose $w<1/2$ and let $\cC$ be the canonical subgroup of $\cG[p]$ as in Theorem \ref{cansubmain}. Let $\cD$ be a finite flat closed subgroup scheme of $\cG[p]$ over $\okey$ such that the natural map $\cC(\okbar)\oplus \cD(\okbar)\to \cG[p](\okbar)$ is an isomorphism.
\begin{enumerate}
\item\label{noncan1}
The truncated Barsotti-Tate group $p^{-1}\cD/\cD$ of level one has Hodge height $\Hdg(p^{-1}\cD/\cD)=p^{-1}w$.
\item\label{noncan2}
The subgroup scheme $\cG[p]/\cD$ is the canonical subgroup of $p^{-1}\cD/\cD$.
\item\label{noncan3}
$\deg(\cD)=p^{-1}w$.
\end{enumerate}
\end{prop}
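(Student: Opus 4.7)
The plan is to exploit the level-two structure of $\cG$ through its Breuil-Kisin module, reducing the proposition to an explicit block-matrix calculation. First I would reduce to the case that the residue field $k$ is perfect, as in the proof of Theorem \ref{cansubmain}. Let $\SGn$ be the Kisin module of $\cG$ over $W_2(k)[[u]]$, and let $\SGm := \SGn/p\SGn$ be the Kisin module of $\cG[p]$. Choose a basis of $\SGm$ adapted to the Hodge filtration so that the matrix of $\varphi$ takes the block form $A = \begin{pmatrix} P_1 & P_2 \\ u^e P_3 & u^e P_4 \end{pmatrix}$ as in the proof of Lemma \ref{uniquelift}, with $v_u(\det P_1) = ew$. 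Under the anti-equivalence, the canonical subgroup $\cC$ corresponds to the submodule $\SGl \subseteq \SGm$ constructed in Lemma \ref{uniquelift}, and the quotient $\cG[p]/\cD$ corresponds to a rank-$d$ $\varphi$-stable submodule $\SGl_\cD \subseteq \SGm$; the hypothesis that $\cC(\okbar)$ and $\cD(\okbar)$ are complementary in $\cG[p](\okbar)$ translates to the statement that $\SGl$ and $\SGl_\cD$ are transverse after inverting $u$.

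The crucial step is part (\ref{noncan1}), computing $\Hdg(p^{-1}\cD/\cD) = p^{-1}w$. First I would verify that $p^{-1}\cD/\cD$ is a truncated Barsotti-Tate group of level one, of height $h$ and dimension $d$, using the short exact sequences
\[
0 \to \cG[p] \to p^{-1}\cD \xrightarrow{p} \cD \to 0, \qquad 0 \to \cG[p]/\cD \to p^{-1}\cD/\cD \xrightarrow{p} \cD \to 0,
\]
the second obtained from the first by quotienting by $\cD$. The Kisin module of $p^{-1}\cD/\cD$ then fits into an exact sequence
\[
0 \to \SGm/\SGl_\cD \to \SGm_{p^{-1}\cD/\cD} \to \SGl_\cD \to 0
\]
of $\varphi$-modules over $k[[u]]$, which can be read off from the two displayed short exact sequences via the Breuil-Kisin anti-equivalence. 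Since $\cD$ is a subgroup of $\cG$ rather than merely of $\cG[p]$, the submodule $\SGl_\cD$ admits a natural $\varphi$-stable $W_2(k)[[u]]$-lift inside $\SGn$, which determines the extension class above. Computing the determinant of $\varphi$ on the conjugate Hodge filtration of $\SGm_{p^{-1}\cD/\cD}$ then yields the desired $u$-valuation $ep^{-1}w$; the factor $p^{-1}$ arises because passing to the level-two lift and reducing modulo $p$ effectively undoes one Frobenius twist in the block structure of the matrix of $\varphi$ on $\SGn$, dividing the relevant valuations by $p$.

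Once (\ref{noncan1}) is established, (\ref{noncan2}) and (\ref{noncan3}) are short. For (\ref{noncan2}), the subgroup $\cG[p]/\cD$ is a finite flat closed subgroup scheme of $p^{-1}\cD/\cD$ of order $p^d$ with the same generic fiber as the canonical subgroup $\cC$ of $\cG[p]$; by the uniqueness in Theorem \ref{cansubmain} (\ref{cansubmain-1}), it suffices to check that $\cG[p]/\cD$ coincides with the Frobenius kernel of $p^{-1}\cD/\cD$ modulo $m_K^{\geq 1-p^{-1}w}$, which is read off from the explicit description of $\SGm_{p^{-1}\cD/\cD}$ together with the fact that the Frobenius on $p^{-1}\cD/\cD$ is inherited from that of $\cG$. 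For (\ref{noncan3}), multiplication by $p$ induces an isomorphism $\cD \cong p^{-1}\cD/\cG[p] = (p^{-1}\cD/\cD)/(\cG[p]/\cD)$ of finite flat group schemes, so that
\[
\deg(\cD) = \deg\bigl((p^{-1}\cD/\cD)/(\cG[p]/\cD)\bigr) = \Hdg(p^{-1}\cD/\cD) = p^{-1}w
\]
by Theorem \ref{cansubmain} (\ref{cansubmain-deg}) applied to the canonical subgroup $\cG[p]/\cD$ of $p^{-1}\cD/\cD$. The main technical obstacle will be the Hodge height calculation in part (\ref{noncan1}): one must control the level-two lift of $\SGl_\cD$ precisely enough to extract the factor of $p^{-1}$ from the determinant on the conjugate Hodge filtration.
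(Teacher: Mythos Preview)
Your overall strategy differs substantially from the paper's, and the central step---the Hodge-height computation in part (\ref{noncan1})---is left as a heuristic rather than an argument. You propose to build the Kisin module of $p^{-1}\cD/\cD$ as an extension of $\SGl_\cD$ by $\SGm/\SGl_\cD$, determined by a $W_2(k)[[u]]$-lift of $\SGl_\cD$ inside the level-two module, and then to read off $v_u(\det\varphi)$ on the conjugate Hodge filtration. But the sentence ``passing to the level-two lift and reducing modulo $p$ effectively undoes one Frobenius twist\ldots dividing the relevant valuations by $p$'' is not a computation: the Hodge height depends on the full $\varphi$-structure of the extension, and you have not explained how the level-two data forces the determinant on $\Fil^1$ to have valuation exactly $ep^{-1}w$ (rather than, say, something bounded above by $ew$). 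You yourself flag this as ``the main technical obstacle,'' and indeed it is not clear how to close it along these lines without essentially reproducing the paper's idea.

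The paper reverses your order of attack and avoids any direct Kisin-module calculation for $p^{-1}\cD/\cD$. It first proves that $(\cG[p]/\cD)\times\Spec(\okey/m_K^{\geq 1-w})$ is killed by Frobenius: the generic isomorphism $\cC\to\cG[p]/\cD$ dualizes to an injection $\SGn^\vee\hookrightarrow(\SGn')^\vee$ of level-one Kisin modules, so $v_u(\det\varphi_{(\SGn')^\vee})\leq v_u(\det\varphi_{\SGn^\vee})=ew$, whence $\varphi_{\SGn'}\equiv 0\bmod u^{e(1-w)}$. Since $p^{-1}\cD/\cD$ is a level-one truncated Barsotti-Tate group of dimension $d$, this identifies $\cG[p]/\cD$ with its Frobenius kernel over $\okey/m_K^{\geq 1-w}$, and hence Frobenius induces an isomorphism $\cG[p]\simeq (p^{-1}\cD/\cD)^{(p)}$ over $\okey/m_K^{\geq 1-w}$. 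Comparing Hodge heights of both sides gives $\min\{w,1-w\}=\min\{p\,\Hdg(p^{-1}\cD/\cD),1-w\}$, which yields (\ref{noncan1}) since $w<1/2$. Parts (\ref{noncan2}) and (\ref{noncan3}) then follow just as you say. The moral: rather than computing the Hodge height of the new group directly, the paper leverages the already-known canonical subgroup $\cC$ of $\cG[p]$ via the generic isomorphism $\cC\to\cG[p]/\cD$, and the $p^{-1}$ factor falls out of the Frobenius-twist isomorphism at the group-scheme level.
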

\begin{proof}
By a base change argument as before, we may assume that the residue field $k$ of $K$ is perfect. Then, by \cite[Th\'{e}or\`{e}me 4.4 (e)]{Il}, there exists a Barsotti-Tate group $\Gamma$ over $\okey$ satisfying $\cG\simeq \Gamma[p^2]$.

Note that the truncated Barsotti-Tate group $p^{-1}\cD/\cD$ is also of height $h$ and dimension $d$. The natural homomorphism $\cC\to \cG[p]/\cD$ induces an isomorphism between the generic fibers of both sides. Now we claim that the group scheme $(\cG[p]/\cD)\times \Spec(\okey/m_K^{\geq 1-w})$ is killed by the Frobenius. Indeed, let $\SGn$ and $\SGn'$ be the objects of the category $\ModSGf$ corresponding to the finite flat group schemes $\cC$ and $\cG[p]/\cD$ via the anti-equivalence $\cG(-)$, respectively. By \cite[Corollary 2.2.2]{Li_FC}, the generic isomorphism $(\cG[p]/\cD)^\vee \to \cC^\vee$ corresponds to an injection $\SGn^\vee \to (\SGn')^\vee$. Then the $\SG_1$-modules $\wedge^d \SGn^\vee$ and $\wedge^d(\SGn')^\vee$ are free of rank one and we also have an injection $\wedge^d \SGn^\vee\to \wedge^d(\SGn')^\vee$. Hence we obtain the inequality $v_u(\det \varphi_{(\SGn')^\vee}) \leq v_u(\det \varphi_{\SGn^\vee})$. By Theorem \ref{cansubmain} (\ref{cansubmain-iso}), the object $\SGn^\vee$ corresponds to the quotient $\cG[p]^\vee/\cC'$ of the Cartier dual $\cG[p]^\vee$ by its canonical subgroup $\cC'$. Thus Lemma \ref{invdiff} (\ref{invdiff-deg}) and Theorem \ref{cansubmain} (\ref{cansubmain-deg}) imply the equality $v_u(\det \varphi_{\SGn^\vee})=ew$. From this and the definition of the dual object of the category $\ModSGf$, we see that the map $\varphi_{\SGn'}$ is zero modulo $u^{e(1-w)}$ and the claim follows from Theorem \ref{ramcorr} (\ref{ramcorr2}). 

Since the group scheme $p^{-1}\cD/\cD$ is a truncated Barsotti-Tate group of level one and dimension $d$, the group scheme in the claim coincides with the kernel of the Frobenius of the group scheme $(p^{-1}\cD/\cD)\times \Spec(\okey/m_K^{\geq 1-w})$. Note the equality $p^{-1}(\cG[p]/\cD)=\cG/\cD$ of closed subgroup schemes of the truncated Barsotti-Tate group $p^{-2}\cD/\cD$. Therefore the Frobenius induces an isomorphism of group schemes over $\Spec(\okey/m_K^{\geq 1-w})$
\[
\cG[p]\simeq (\cG/\cD)/(\cG[p]/\cD)=p^{-1}(\cG[p]/\cD)/(\cG[p]/\cD)\overset{F}{\to} (p^{-1}\cD/\cD)^{(p)}.
\]
Considering the Hodge heights of both sides, we have the equality
\[
\min \{w, 1-w\}=\min \{p\Hdg(p^{-1}\cD/\cD),1-w\},
\]
from which the assertion (\ref{noncan1}) follows. The uniqueness of the canonical subgroup in Theorem \ref{cansubmain} (\ref{cansubmain-1}) implies the assertion (\ref{noncan2}). The last assertion follows from the isomorphism $(p^{-1}\cD/\cD)/(\cG[p]/\cD)\simeq \cD$ and Theorem \ref{cansubmain} (\ref{cansubmain-deg}).
\end{proof}

We give here remarks on the uniqueness of the canonical subgroup $\cC_n$. Let $\cG$ be a truncated Barsotti-Tate group over $\okey$ of level $n$, height $h$, dimension $d$ and Hodge height $w<1/(p^{n-2}(p+1))$. Let $\cD_n$ be a finite flat closed subgroup scheme of $\cG$ over $\okey$. Then an induction and the uniqueness assertion in Theorem \ref{cansubmain} (\ref{cansubmain-1}) show the following uniqueness of $\cC_n$: Suppose that there exists a filtration $0=\cD_0\subseteq \cD_1\subseteq \cD_2\subseteq\cdots \subseteq \cD_{n-1}\subseteq \cD_n$ of finite flat closed subgroup schemes over $\okey$ such that $\cD_i/\cD_{i-1}$ is killed by $p$, of order $p^d$ and its modulo $m_K^{\geq 1-p^{i-1}w}$ is killed by the Frobenius for any $i$. Then we have $\cC_n=\cD_n$. 

On the other hand, the following stronger uniqueness also holds for $w<p(p-1)/(p^{n+1}-1)$. Note the inequalities 
\[
(p-1)/p^n<p(p-1)/(p^{n+1}-1)< (p-1)/(p^n-1).
\]

\begin{prop}\label{uniqueFrn}
Let $\cG$ be a truncated Barsotti-Tate group over $\okey$ of level $n$, height $h$, dimension $d$ and Hodge height $w<p(p-1)/(p^{n+1}-1)$. Let $\cD_n$ be a finite flat closed subgroup scheme of $\cG$ over $\okey$ such that $\cD_n(\okbar)\simeq (\bZ/p^n\bZ)^d$ and the group scheme $\cD_n\times\Spec(\okey/m_K^{\geq 1-p^{n-1}w})$ is killed by the $n$-th iterated Frobenius $F^n$. Then we have $\cC_n=\cD_n$.
\end{prop}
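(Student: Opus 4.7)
The plan is to show that $\cD_n = \cC_n$ by proving the inclusion $\cD_n(\okbar) \subseteq \cC_n(\okbar)$ and then comparing orders: since both are finite flat over $\okey$ of order $p^{nd}$ and equal the scheme-theoretic closures of their generic fibres, such an inclusion promotes to scheme-theoretic inclusion and hence equality. To begin, observe that by Theorem \ref{main} the subgroup scheme $\cC_n$ coincides with the kernel of $F^n$ on $\cG\times\Spec(\okey/m_K^{\geq 1-p^{n-1}w})$, and this Frobenius kernel has order $p^{nd}$. The hypotheses on $\cD_n$ together with the equality of orders force $\cD_n = \cC_n$ as closed subgroup schemes of $\cG$ after reduction modulo $m_K^{\geq 1-p^{n-1}w}$. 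Consequently the composition $\cD_n \hookrightarrow \cG \to \cG/\cC_n$ vanishes after this reduction, so for every $x\in\cD_n(\okbar)$ its image $\bar{x}\in(\cG/\cC_n)(\okbar)$ lies in the lower ramification subgroup $(\cG/\cC_n)_{1-p^{n-1}w}(\okbar)$. The problem is therefore reduced to showing this ramification subgroup is trivial.

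The heart of the plan is to construct a filtration of $\cG/\cC_n$ of length $n$ whose subquotients are all isomorphic to the $p$-torsion group scheme $\cG[p]/\cC_1$. The basic building block is the exact sequence
\[
0 \to \cG[p]/\cC_1 \to \cG/\cC_n \to \cG[p^{n-1}]/\cC_{n-1} \to 0
\]
coming from multiplication by $p$ on $\cG$, together with the identifications $\cC_n[p]=\cC_1$ and $p\cC_n = \cC_{n-1}$ given by Theorem \ref{main} (\ref{main-sub}) and the freeness $\cC_n(\okbar)\simeq(\bZ/p^n\bZ)^d$. Iterating inside each $\cG[p^i]$ with its level-$i$ canonical subgroup $\cC_i$ produces the desired length-$n$ filtration. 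Each subquotient $\cG[p]/\cC_1$ is killed by $p$ and has degree $w$ by Theorem \ref{cansubmain} (\ref{cansubmain-deg}), so Lemma \ref{lowramdeg} yields $(\cG[p]/\cC_1)_j(\okbar) = 0$ for $j > w/(p-1)$. A standard dévissage then propagates this vanishing along the filtration: in any short exact sequence $0\to A\to B\to C\to 0$ of finite flat group schemes killed by a power of $p$, the sequence on $\okbar$-points is exact (generic fibres being étale), and the preservation of $A\hookrightarrow B$ under reduction modulo $m_{\Kbar}^{\geq j}$ gives $B_j(\okbar)=0$ whenever $A_j(\okbar) = C_j(\okbar) = 0$. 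Iterating this through the filtration yields $(\cG/\cC_n)_j(\okbar) = 0$ for all $j > w/(p-1)$.

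The final step is the numerical check that $1 - p^{n-1}w > w/(p-1)$, equivalent to $w < (p-1)/(1+p^n-p^{n-1})$. The hypothesis $w < p(p-1)/(p^{n+1}-1)$ implies this, since the inequality $p(1+p^n-p^{n-1}) \leq p^{n+1}-1$ reduces to $p^n \geq p+1$, which holds for all $n\geq 2$; the case $n=1$ is immediate from the uniqueness assertion of Theorem \ref{cansubmain} (\ref{cansubmain-1}). Combining the filtration vanishing with this inequality gives $\bar{x} = 0$ for every $x\in\cD_n(\okbar)$, so $\cD_n(\okbar)\subseteq\cC_n(\okbar)$ and the argument is complete. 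The main obstacle in this plan is the careful construction of the length-$n$ filtration of $\cG/\cC_n$ together with the dévissage for lower ramification on $\okbar$-points; the numerical comparison is elementary but essential, as it is precisely what pins down the sharp range $w < p(p-1)/(p^{n+1}-1)$ in the statement.
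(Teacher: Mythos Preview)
Your approach has a genuine gap in the construction of the filtration. The claimed short exact sequence
\[
0 \to \cG[p]/\cC_1 \to \cG/\cC_n \to \cG[p^{n-1}]/\cC_{n-1} \to 0
\]
is exact on $\okbar$-points but \emph{not} as a sequence of finite flat group schemes over $\okey$. You can see this by comparing degrees: additivity would force
\[
\deg(\cG/\cC_n)=\deg(\cG[p]/\cC_1)+\deg(\cG[p^{n-1}]/\cC_{n-1})=w+\frac{w(p^{n-1}-1)}{p-1},
\]
but Theorem~\ref{main}~(\ref{main-deg}) gives $\deg(\cG/\cC_n)=w(p^n-1)/(p-1)$, and the difference is $w(p^{n-1}-1)>0$ for $n\geq 2$. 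Equivalently, the map $\cG[p]/\cC_1\to\cG/\cC_n$ is a closed immersion if and only if $\cC_n\cap\cG[p]=\cC_1$ as schemes, i.e.\ $\cC_n[p]=\cC_1$; Theorem~\ref{main}~(\ref{main-sub}) only identifies $\cC_1$ with the \emph{scheme-theoretic closure} of $\cC_n(\okbar)[p]$, which is in general strictly smaller than $\cC_n[p]$.

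The actual scheme-theoretic kernel of $\cG/\cC_n\to\cG[p^{n-1}]/\cC_{n-1}$ is $p^{-1}(\cC_{n-1})/\cC_n$, which is finite flat of degree $wp^{n-1}$. Running your d\'evissage with this correct kernel (and inducting) yields only $(\cG/\cC_n)_j=0$ for $j>wp^{n-1}/(p-1)$, and the numerical condition $1-p^{n-1}w>wp^{n-1}/(p-1)$ becomes $w<(p-1)/p^n$, strictly weaker than the bound $p(p-1)/(p^{n+1}-1)$ in the statement. So even after repair, your method does not reach the sharp range.

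The paper's proof avoids this obstacle entirely: it argues by induction on $n$, first showing that the closure $\cD_1$ of $\cD_n(\okbar)[p]$ coincides with $\cC_1$ by a contradiction argument using the anti-canonical isogeny result (Proposition~\ref{noncanisog}), and then applying the induction hypothesis to $\cD_n/\cC_1$ inside $p^{-(n-1)}\cC_1/\cC_1$.
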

\begin{proof}
We proceed by induction on $n$. The case of $n=1$ follows from Theorem \ref{cansubmain} (\ref{cansubmain-1}). Suppose that $n\geq 2$ and the assertion holds for $n-1$. Let $\cC_1$ be the scheme-theoretic closure of $\cC_n(\okbar)[p]$ in $\cC_n$ and define $\cD_1$ similarly. By Theorem \ref{main} (\ref{main-sub}), the subgroup scheme $\cC_1$ is the canonical subgroup of $\cG[p]$. First we claim $\cC_1=\cD_1$. For this, by a base change argument as before, we may assume that the residue field $k$ of $K$ is perfect and $\cG(\okbar)=\cG(\okey)$. Suppose $\cC_1\neq \cD_1$. Then we can find a finite flat closed subgroup scheme $\cE$ of $\cG[p]$ such that $\cC_1(\okbar)\oplus \cE(\okbar)=\cG[p](\okbar)$ and $\cD_1(\okbar)\cap \cE(\okbar)\neq 0$. Let $\cF$ be the scheme-theoretic closure of the latter intersection in $\cG[p]$, which is a closed subgroup scheme of $\cE$. Let $\SGl$, $\SGe$ and $\SGf$ be the objects of the category $\ModSGf$ corresponding to $\cG[p]/\cC_1$, $\cE$ and $\cF$, respectively. Since we have the inequality $w<p(p-1)/(p^3-1)<1/2$, Proposition \ref{noncanisog} (\ref{noncan3}) implies $v_u(\det \varphi_\SGf)\leq  v_u(\det\varphi_{\SGe})=p^{-1}ew$. On the other hand, since $\cF$ is also a closed subgroup scheme of $\cD_n$, the $n$-th iterated Frobenius of $\cF\times\Spec(\okey/m_K^{\geq 1-p^{n-1}w})$ is zero. By Theorem \ref{ramcorr} (\ref{ramcorr2}), we see that the entries of a representing matrix of the map
\[
\SG_1\otimes_{\varphi^n,\SG_1}\SGf \overset{1\otimes \varphi_\SGf}{\to} \SG_1\otimes_{\varphi^{n-1},\SG_1}\SGf\overset{1\otimes \varphi_\SGf}{\to}\cdots\overset{1\otimes \varphi_\SGf}{\to}\SG_1\otimes_{\varphi,\SG_1}\SGf\overset{1\otimes \varphi_\SGf}{\to}\SGf
\]
have $u$-adic valuations no less than $e(1-p^{n-1}w)$. Taking the valuation of the determinant of this map, we obtain the inequalities 
\[
1-p^{n-1}w\leq e^{-1}v_u(\det\varphi_\SGf)(p^n-1)/(p-1)\leq w(p^n-1)/(p^2-p), 
\]
which contradict the assumption on $w$ and the equality $\cC_1=\cD_1$ follows.

Now the group scheme $p^{-(n-1)}\cC_1/\cC_1$ is a truncated Barsotti-Tate group of level $n-1$, height $h$, dimension $d$ and Hodge height $pw$ with the level $n-1$ canonical subgroup $\cC_n/\cC_1$. The finite flat closed subgroup scheme $\cD_n/\cC_1$ satisfies $(\cD_n/\cC_1)(\okbar)\simeq (\bZ/p^{n-1}\bZ)^d$ and its modulo $m_K^{\geq 1-p^{n-2}pw}$ is killed by $F^{n-1}$. Thus the induction hypothesis implies the equality $\cC_n/\cC_1=\cD_n/\cC_1$. This concludes the proof of the proposition.
\end{proof}

To show Corollary \ref{cansubfamily}, we need a patching lemma for upper ramification subgroups, which is a slight generalization of \cite[Proposition 8.2.2]{AM}. Let $K/\bQ_p$ be an extension of complete discrete valuation fields and $j$ be a positive rational number. Let $\frX$ be an admissible formal scheme over $\Spf(\okey)$ which is quasi-compact and $X$ be its Raynaud generic fiber. For a finite extension $L/K$, an $\oel$-valued point $x:\Spf(\oel)\to \frX$ and an ideal $\sI$ of $\cO_{\frX}$, we let $\sI(x)$ denote the ideal of $\oel$ generated by the image of $\sI$. The $p$-adic valuation of a generator of this ideal is denoted by $v_p(\sI(x))$. For any $x\in X(L)$, we let $x$ also denote the map $\Spf(\oel)\to \frX$ obtained from $x$ by taking the scheme-theoretic closure and the normalization.

\begin{lem}\label{familyup}
Let the notation be as above. Let $\frG$ be a finite locally free formal group scheme over $\frX$, $G$ be its Raynaud generic fiber and $\sH$ be a coherent open ideal of $\cO_{\frX}$.
Then there exists an admissible open subgroup $G^{j\sH+}$ of $G$ such that for any finite extension $L/K$ and $x\in X(L)$, the fiber $G_x^{j\sH+}$ coincides with the generic fiber of the upper ramification subgroup scheme $\frG_{x}^{jv_p(\sH(x))+}$ of the finite flat (formal) group scheme $\frG_{x}=\frG\times_{\frX,x}\Spf(\oel)$ over $\oel$. 
\end{lem}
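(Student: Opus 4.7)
The lemma is essentially the generalization of \cite[Proposition 8.2.2]{AM} allowing the ramification threshold to be twisted by the coherent ideal $\sH$ on the base, and my plan is to adapt the Abbes--Mokrane construction almost verbatim. Since the desired admissible open is characterized fiberwise and the construction will be canonical, I would first reduce to the case where $\frX = \Spf(A)$ is affine by covering $\frX$ by finitely many affine opens (using the quasi-compactness assumption). Then $\sH$ is generated by finitely many sections $h_1,\ldots,h_s \in A$ and $\frG = \Spf(B)$ with $B$ a finite locally free $A$-algebra; let $J = \Ker(B \to A)$ denote the augmentation ideal of the zero section.

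Choosing a system of generators $b_1,\ldots,b_n$ of $J$ as an $A$-module, the assignment $T_i \mapsto b_i$ yields a surjection $A\{T_1,\ldots,T_n\} \twoheadrightarrow B$ with kernel generated by some $F_1,\ldots,F_m$ satisfying $F_k(0)=0$, hence a closed immersion of $\frG$ into the relative formal $n$-polydisk over $\frX$ that sends the zero section to the origin. Writing $j = m_j/n_j$ in lowest terms, I would then define the admissible open subset of the relative rigid-analytic closed $n$-polydisk $\bar{D}^n_X$ over $X$ by
\[
Z = \bigcup_{i=1}^{s} \bigl\{ (x, T) \in \bar{D}^n_X : |F_k(T)|^{n_j} < |h_i(x)|^{m_j} \text{ for all } k \bigr\},
\]
a finite union of rational subdomains. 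For any $x \in X(L)$, the fiber $Z_x$ is cut out by $v_p(F_k(T)) > j\, v_p(\sH(x))$ for all $k$, which is precisely the Abbes--Saito tube defining the upper ramification subgroup $\frG_x^{jv_p(\sH(x))+}$ in the normalization used in the excerpt.

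Next, following \cite[Section 8.2]{AM}, I would isolate inside $Z$ an admissible open $Z^\circ$ whose geometric fiber over any $x$ is the identity connected component of $Z_x$, and define $G^{j\sH+} := G \cap Z^\circ$ via the closed immersion into $\bar{D}^n_X$. By construction and the Abbes--Saito definition of upper ramification, the fiber of $G^{j\sH+}$ over any $x \in X(L)$ equals the generic fiber of $\frG_x^{jv_p(\sH(x))+}$. The subgroup property can be verified fiberwise, and independence of the choice of generators $b_i$ and $h_i$, and of the presentation $F_k$, follows from this intrinsic fiberwise characterization, so the local constructions glue to a global admissible open subgroup of $G$.

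The main obstacle is to carve out $Z^\circ$ as a genuine admissible open subspace of $Z$ rather than merely a pointwise assignment: one must show that the identity component of the Abbes--Saito tube varies rigid-analytically with $x$. In \cite{AM} this is treated for tubes with constant parameter, and the extension to the present setting can be handled by passing to the admissible blow-up of $\frX$ along $\sH$, on which $\sH$ becomes invertible, so that the ideal-twisted threshold is controlled by a single section and one reduces to the principal-ideal situation handled in \cite{AM}. Once $Z^\circ$ is understood, all remaining verifications are formal.
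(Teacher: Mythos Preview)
Your proposal is broadly correct and follows the Abbes--Mokrane template, but it differs from the paper's proof in the choice of ambient embedding, and this difference shifts where the technical work lies.

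The paper first blows up along $\sH$ (as you suggest doing only at the end) and then, instead of embedding $\frG$ into a relative polydisk via generators of the augmentation ideal, invokes Raynaud's theorem to obtain a closed immersion of $\frG$ into a \emph{projective abelian scheme} $\cA$ over the base. Writing $\frP$ for the formal completion of $\cA$ and $\sI$ for the ideal of $\frG$ in $\frP$, the paper defines closed tubes $Z^{jh+j'}$ via admissible blow-up along $\sI^n+p^{m'}h^m\cO_{\frP}$ (so non-strict inequalities with an auxiliary parameter $j'$, followed by a union over $j'>0$, rather than your direct strict inequalities---incidentally, note that $|F_k|^{n_j}<|h_i|^{m_j}$ does not cut out a rational subdomain as you claim). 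Because the ambient $\frP$ is already a group, each $Z^{jh+j'}$ is a rigid-analytic \emph{subgroup} of $P$ (checked fiberwise via universality of dilatations), and then \cite[Proposition A.1.2]{AM}, which is stated for rigid-analytic \emph{groups}, applies directly to extract the identity component $Z^{jh+j',0}$ as an admissible open subgroup. The price the paper pays is an extra verification at the end: since the Abbes--Mokrane tubular neighborhoods are defined via embeddings into objects of $\mathrm{SFA}_{\oel}$, one must pass to an affine open $\cU_x$ of $\cA_x$ containing the image of $\frG_x$ and check that $(Z^{jh+j'})_x$ is already supported over $U_x$.

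Your polydisk embedding has the opposite trade-off: the fiber $Z_x$ is literally the Abbes--Saito tube with no further comparison needed, but $Z$ itself is \emph{not} a rigid-analytic group (the polydisk carries no group law compatible with that of $\frG$), so the step you flag as the main obstacle---carving out the identity component $Z^\circ$ as an admissible open---cannot be done by quoting \cite[Proposition A.1.2]{AM} for groups verbatim; you would need the more general statements in the appendix of \cite{AM} on relative connected components of smooth quasi-compact rigid spaces over $X$, together with a separate fiberwise argument that $G\cap Z^\circ$ is a subgroup of $G$. This is all available in \cite{AM}, so your route does go through, but the paper's abelian-scheme embedding is chosen precisely to make the group structure and the component extraction automatic.
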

\begin{proof}
By replacing $\frX$ by the admissible blow-up of $\frX$ along the ideal $\cH$, we may assume that the ideal $\cH$ is invertible. It suffices to show the existence of an admissible open subgroup as in the lemma locally. Thus we may assume that $\frX=\Spf(B)$ is affine, the ideal $\cH$ is generated by $h\in B$ and there exists a closed immersion of $\frG$ to a projective abelian scheme $\cA$ over $\Spec(B)$, by a theorem of Raynaud (\cite[Th\'{e}or\`{e}me 3.1.1]{BBM}). Let $\frP$ be the formal completion of $\cA$ along the special fiber, $P$ be its Raynaud generic fiber and $\sI$ be the defining ideal of the closed immersion $\frG\to \frP$. For positive rational numbers $j$ and $j'$, write $j=m/n$ and $j'=m'/n$ with positive integers $m,m',n$ and put $\sJ=\sI^n+p^{m'}h^m\cO_\frP$. Let $\frB$ be the admissible blow-up of $\frP$ along the ideal $\sJ$ and $\frZ^{m,m',n,h}$ be the formal open subscheme of $\frB$ where $p^{m'}h^m$ generates the ideal $\sJ\cO_\frB$. 

The Raynaud generic fiber of $\frZ^{m,m',n,h}$ is the admissible open subset of $P$ whose set of $\Kbar$-valued points is given by
\[
\{y\in P(\Kbar)| v_p(\sI(y))\geq j v_p(h(y)) +j'\}
\]
and it is independent of the choice of $m,m',n$. We write this Raynaud generic fiber as $Z^{jh+j'}$. We claim that the admissible open subset $Z^{jh+j'}$ is a rigid-analytic subgroup of $P$. For this, take a finite extension $L/K$ and $x\in X(L)$. It is enough to show that the fiber $(Z^{jh+j'})_x$ is a rigid-analytic subgroup of $P_x$. This is the admissible open subset of $P_x$ consisting of $y\in P_x(\bar{L})$ such that the inequality $v_L(\sI(y))\geq e(L/\bQ_p)(jv_p(h(x))+j')$ holds, where $v_L$ is the normalized valuation of $L$. Taking a sufficiently large $L$, we may assume that the constant $e(L/\bQ_p)(jv_p(h(x))+j')$ is an integer. Then the universality of the dilatation implies the claim, as in the proof of \cite[Proposition 8.2.2]{AM}.

By \cite[Proposition A.1.2]{AM}, there exists an admissible open subgroup $Z^{jh+j',0}$ of $Z^{jh+j'}$ such that for any $x\in X(\Kbar)$, the fiber $(Z^{jh+j',0})_x$ coincides with the unit component $(Z^{jh+j'}_x)^0$ of the rigid-analytic group $Z^{jh+j'}_x$. Put 
\[
Z^{jh+}=\bigcup_{j'>0}Z^{jh+j'}, Z^{jh+,0}=\bigcup_{j'>0}Z^{jh+j',0}
\]
and $G^{j\sH+}=G\cap Z^{jh+,0}$. These are admissible open subgroups of $P$ and $G$, respectively. To show that $G^{j\sH+}$ satisfies the property as in the lemma, it is enough to show that, for any finite extension $L/K$ and $x\in X(L)$, the fiber $(Z^{jh+j'})_x$ is naturally isomorphic to the tubular neighborhood $X^ {jv_p(h(x))+j'}(\frG_x\to \frZ)$ of a closed immersion from $\frG_x$ to a formal affine scheme $\frZ$ which is an object of the category $\mathrm{SFA}_{\oel}$ (\cite[Notation 1.5]{AM}). Take $x\in X(L)$. Since $\cA$ is projective and $\frG_x$ is finite over $\oel$, there exists an affine open subscheme $\cU_x$ of $\cA_x$ such that the closed immersion $\frG_x\to \cA_x$ factors through $\cU_x$. Let $\hat{\cU}_x$ be its formal completion along the special fiber and $U_x$ be its Raynaud generic fiber. Then $\hat{\cU}_x$ is an object of the category $\mathrm{SFA}_{\oel}$. Moreover, $\hat{\cU}_x$ is a formal affine open subscheme of $\frP_x$ through which the closed immersion $\frG_x \to \frP_x$ factors and the restriction $(Z^{jh+j'})_x|_{U_x}$ coincides with the tubular neighborhood $X^{jv_p(h(x))+j'}(\frG_x\to \hat{\cU}_x)$. Therefore we are reduced to show the equality $(Z^{jh+j'})_x|_{U_x}=(Z^{jh+j'})_x$. For this, take $y\in (Z^{jh+j'})_x(\bar{L})$. Then we have $v_p(\sI(y))\geq jv_p(h(x))+j'> 0$. Put $\frP_x\setminus \hat{\cU}_x=V(\sI'_x)$ with some ideal $\sI'_x\subseteq \cO_{\frP_x}$. Let $\bar{\frP}_x$ and $\bar{y}$ denote the special fibers of $\frP_x$ and $y$, respectively. Consider the inverse image $\bar{\sI}_x$ ({\it resp.} $\bar{\sI}'_x$) of $\sI_x$ ({\it resp.} $\sI'_x$) in $\bar{\frP}_x$. Since $V(\bar{\sI}_x)\cap V(\bar{\sI}'_x)=\emptyset$, we have $\bar{\sI}_x + \bar{\sI}'_x=(1)$ in any open affine neighborhood of the image of $\bar{y}$ in $\bar{\frP}_x$. Thus the image is not contained in $V(\bar{\sI}'_x)$ and we obtain $y\in (Z^{jh+j'})_x|_{U_x}$. 
\end{proof}

\begin{proof}[Proof of Corollary \ref{cansubfamily}]
Define a positive rational number $j$ and an open coherent ideal $\sH$ of $\cO_{\frX}$ by $j=p(p^n-1)/(p-1)^2$ and $\sH=\Ha_{\frI}(\frG[p])$ (\cite[2.2.2]{Fa}). We define $C_n$ to be the admissible open subgroup $G^{j\sH+}$ of $G$ as in Lemma \ref{familyup}. Then, by Theorem \ref{main} (\ref{main-ram1}) and (\ref{main-ram}), each fiber $(C_n)_x$ coincides with the generic fiber of the level $n$ canonical subgroup of $\frG_x$. Since $G$ is etale over $X$, the admissible open subgroup $C_n$ of $G$ is also etale over $X$. Since its fibers over $X(r_n)$ are isomorphic to the group $(\bZ/p^n\bZ)^d$ by Theorem \ref{main} (\ref{main-free}), \cite[Lemme A.1.1]{AM} implies that $C_n$ is finite over $X(r_n)$. Hence the rigid-analytic group $C_n$ over $X(r_n)$ is etale locally constant, and thus etale locally on $X(r_n)$ this is isomorphic to the constant group $(\bZ/p^n\bZ)^d$.
\end{proof}



\end{document}